\theoremstyle{definition}
\newtheorem{theorem}{Theorem}
\newtheorem{lemma}[theorem]{Lemma}
\newtheorem{proposition}[theorem]{Proposition}
\numberwithin{equation}{section}
\numberwithin{theorem}{section}
\begin{document}

\begin{center}
{\bf{\Large On certain quaternary quadratic forms}}
\end{center}

\begin{center}
By Kazuhide Matsuda
\end{center}

\begin{center}
Department of Engineering Science, Niihama National College of Technology,\\
7-1 Yagumo-chou, Niihama, Ehime, Japan, 792-8580 \\
E-mail: matsuda@sci.niihama-nct.ac.jp  \\
Fax: 0897-37-7809 
\end{center}

{\bf Abstract}
In this paper, 
we determine all the positive integers $a, b$ and $c$ such that 
every nonnegative integer can be represented as 
$$
f^{a,b}_c(x,y,z,w)=ax^2+by^2+c(z^2+zw+w^2) \,\, \textrm{with} \,\,x,y,z,w\in\mathbb{Z}.
$$
Furthermore, 
we prove that $f^{a,b}_c$ can represent all the nonnegative integers if it represents $n=1,2,3,5,6,10.$ 
\newline
{\bf Key Words:} quaternary quadratic forms; theta functions; ``The 290-theorem''. 
\newline
{\bf MSC(2010)}  14K25;  11E25

\section{Introduction}
\quad
Throughout this paper, 
set $\mathbb{N}=\{1,2,3,\ldots\}$ and $\mathbb{N}_0=\{0,1,2,3,\ldots\}.$ 
$\mathbb{Z}$ denotes the set of rational integers. 
In addition, 
the triangular numbers are $t_x=x(x+1)/2,\,\, (x\in\mathbb{N}_0)$ and  
the squares are $y^2, \,\,(y\in\mathbb{Z}).$ 
Furthermore, $a,b$ and $c$ are fixed positive integers with $1\le a\le b.$
\par
Lagrange proved that 
every $n\in\mathbb{N}_0$ is a sum of four squares. 
Noted is that Jacobi \cite{Jacobi} showed this fact using the elliptic function theory. 
\par
Ramanujan \cite{Ramanujan} determined all the positive integers 
$a,b,c$ and $d$ such that every $n\in\mathbb{N}_0$ is represented as 
$ax^2+by^2+cz^2+du^2$ with $x,y,z,u\in\mathbb{Z}.$ 
He proved that there exist fifty-four such quadruples $(a,b,c,d)$ with $1\le a \le b \le c \le d.$ 
A proof of this fact is contained in Dickson \cite{Dickson-2}. 
\par 
The aim of this paper is to determine all the positive integers 
$a, b$ and $c$ with $a\le b$ 
such that 
every $n\in\mathbb{N}_0$ can be represented as 
$$
f^{a,b}_c(x,y,z,w)=ax^2+by^2+c(z^2+zw+w^2)
$$ 
with $x,y,z,w\in\mathbb{Z}.$ 
Furthermore, 
we prove that $f^{a,b}_c$ can represent all the nonnegative integers if it represents $n=1,2,3,5,6,10.$ 
Our main theorem is as follows: 
\begin{theorem}
\label{thm:main}
{\it
For positive integers $a,b$ and $c$ with $a\le b,$ 
set
$$
f^{a,b}_c(x,y,z,w)=ax^2+by^2+c(z^2+zw+w^2), \,\,(x,y,z,w\in\mathbb{Z}).
$$ 
\begin{enumerate}\itemsep=0pt
\item[$(1)$] 
$f^{a,b}_c$ can represent any nonnegative integers 
if and only if  
\begin{equation*}
(a,b,c)=
\begin{cases}
(1,b,1)   &(b=1,2,3,4,5,6), \\
(2,b,1)  &(b=2,3,4,5,6,7,8,9,10),  \\
(1,b,2)  &(b=1,2,3,4,5),  \\
(1,2,3),   &  \\
(1,2,4).   &
\end{cases}
\end{equation*}
\item[$(2)$] 
If $f^{a,b}_c$ represents $n=1,2,3,5,6,10,$ 
then it can represent all the nonnegative integers. 
\end{enumerate}
}
\end{theorem}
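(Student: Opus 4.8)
The plan is to establish parts (1) and (2) simultaneously, by first cutting down to a finite list of candidate triples $(a,b,c)$ and then verifying universality for each of them.

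\textbf{Step 1: the values of the ternary part.} I will use the classical description of the integers represented by $z^2+zw+w^2$, the norm form of the Eisenstein integers $\mathbb{Z}[\zeta_3]$: a nonnegative integer $n$ has this shape exactly when every prime $p\equiv 2\pmod 3$ divides $n$ to an even power (together with $n=0$). Thus $z^2+zw+w^2$ represents $0,1,3,4,7,9,12,13,\dots$ but misses $2,5,6,10,\dots$. Knowing this set precisely lets me read off which small integers $f^{a,b}_c$ can represent for a given $(a,b,c)$.

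\textbf{Step 2: escalation to a finite list.} I claim the triples $(a,b,c)$ with $1\le a\le b$ for which $f^{a,b}_c$ represents all of $n=1,2,3,5,6,10$ are exactly the twenty-two triples tabulated in part (1). This is an escalator argument in the spirit of Bhargava's proof of the Fifteen Theorem. Representing $n=1$ already forces $a=1$ or $c=1$; branching on this and then imposing representability of $2$, then $3$, then $5,6,10$ in turn — at each node Step 1 leaves only finitely many admissible new coefficient values — produces a finite tree. One checks that along every branch $c$ gets pinned to a value in $\{1,2,3,4\}$ and $b$ is then forced into a bounded range, and that the surviving leaves are precisely the listed triples; conversely each listed triple visibly represents $1,2,3,5,6,10$. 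The bookkeeping is the longest part of the argument, but each individual case is elementary.

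\textbf{Step 3: universality of the listed forms.} Each $f^{a,b}_c$ is a positive-definite, integer-valued quaternary quadratic form, so by the $290$-Theorem of Bhargava and Hanke it represents every nonnegative integer as soon as it represents the critical integers $1,2,3,5,6,7,10,13,14,15,17,19,21,22,23,26,29,30,31,34,35,37,42,58,93,110,145,203,290$. For the small triples this is an immediate check; for the extremal ones (those with $b$ near the top of its range, such as $(2,10,1)$ or $(1,6,1)$) I would organize the verification through theta series, writing the representation generating function of $f^{a,b}_c$ as the product $\theta_3(q^a)\theta_3(q^b)\,\Theta(q^c)$, where $\Theta$ is the theta series of the hexagonal lattice $A_2$, and then either comparing with representation functions of well-understood ternary subforms or splitting off the Eisenstein part and bounding the cuspidal contribution, so as to certify positivity of the relevant coefficients without an unwieldy search. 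Step 3 gives the ``if'' direction of part (1); the ``only if'' direction is immediate from Step 2, since a universal form a fortiori represents $1,2,3,5,6,10$ and hence appears in the list; and part (2) then follows at once, because any $f^{a,b}_c$ representing $\{1,2,3,5,6,10\}$ lies in the list by Step 2 and is therefore universal by Step 3.

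\textbf{Expected main obstacle.} The enumeration in Step 2, though lengthy, should be routine once Step 1 is in hand. The genuine difficulty is Step 3 for the handful of extremal triples, where representability of the larger critical values (up to $290$) is not transparent and one must choose the right ternary reduction or control the cusp form carefully; this is where the theta-function machinery is indispensable.
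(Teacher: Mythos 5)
Your strategy is legitimate but genuinely different from the paper's. The paper never uses the 290-theorem in its proof (it cites it only to put part (2) in context); instead it works one value of $c$ at a time and, for each relevant ternary subform $ax^2+c(z^2+zw+w^2)$ with $(a,c)\in\{(1,1),(2,1),(1,2),(1,3),(1,4)\}$, obtains an \emph{exact} description of the represented integers: the identities $a(q)=a(q^4)+6q\psi(q^2)\psi(q^6)$ and $\varphi(q)\varphi(q^3)=a(q^4)+2q\psi(q^2)\psi(q^6)$ show these ternary forms represent precisely the same integers as the diagonal forms $x^2+y^2+3z^2$, $x^2+2y^2+3z^2$, $x^2+3y^2+9z^2$, $x^2+4y^2+12z^2$, whose exceptional sets ($9^k(9l+6)$, $4^k(16l+10)$, $3l+2$, etc.) Dickson determined. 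Universality of each listed $f^{a,b}_c$ is then proved by showing directly that $by^2$ (with $y=1,2,3,4$ or $5$ chosen by congruence class of $n$) covers the exceptional progressions, and the ``only if'' direction and part (2) fall out of the same case analysis. Your route --- escalate to a finite list using $\{1,2,3,5,6,10\}$, then certify each survivor by the 290-theorem --- is logically valid (the forms have integer coefficients and are positive definite, so the theorem applies), and it makes part (2) immediate; what it costs is dependence on a far deeper external result plus verification of all 29 critical integers for each of the 22 forms, where the paper's argument is self-contained modulo Dickson and also yields the ternary representation criteria as results of independent interest.

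Two caveats on your write-up. First, Step 3 misplaces the difficulty: once the 290-theorem is granted, checking the 29 critical integers for 22 explicit forms is a finite elementary search (each variable is bounded by $\sqrt{290}$), and no Eisenstein-plus-cusp-form decomposition is needed; conversely, neither that search nor the escalator bookkeeping of Step 2 is actually carried out, so as it stands you have a correct plan rather than a proof. Second, your Step 1 characterization of $z^2+zw+w^2$ is correct and does suffice for the escalation, but be aware that if you follow the ``well-understood ternary subforms'' shortcut you mention, you will end up needing essentially the same ternary characterizations the paper proves via the theta identities above.
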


\subsection*{Remarks}
Theorem \ref{thm:main} (2) is the restriction of ``The 290-theorem'' to the forms $ax^2+by^2+c(z^2+zw+w^2).$
Bhargava and Hanke \cite{Bhargava-Hanke} proved ``The 290-theorem''. 
\begin{theorem}(``The 290-theorem'')
\label{thm:290}
{\it
If a positive-definite quadratic form with integer coefficients 
represents the 29 integers
\begin{align*}
&1,2,3,5,6,7,10,13,14,15,17,19,21,22,23,26,  \\
&29,30,31,34,35,37,42,58,93,110,145,203,290,
\end{align*}
then it represents all positive integers.
}
\end{theorem}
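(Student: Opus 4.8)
The plan is to prove the theorem by the \emph{escalation method}, which reduces the infinitary statement ``represents every positive integer'' to a finite, if vast, tree of explicit lattices, each of which is then settled by the analytic theory of theta series. I model a positive-definite integer-valued form by its lattice $(L,Q)$, and say an integer is the \emph{truant} of $L$ if it is the smallest positive integer not represented by $Q$. Starting from the zero lattice, whose truant is $1$, I build a rooted tree: at each node $L$ with truant $t$, the children are the lattices $L'=L\oplus\Z v$ obtained by adjoining one basis vector $v$ so that $Q'$ represents $t$. Positive-definiteness forces the Gram entries of $v$ into a bounded range determined by $t$ and $L$, so each node has finitely many children; a node is a \emph{leaf} once its form is already known to be universal. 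First I would prove that along every branch the truant is forced to lie in the listed set $\{1,2,3,5,6,7,\dots,290\}$, that the escalators never need exceed rank five, and hence that the tree is finite. This is the combinatorial backbone, and it is precisely here that the number $290$ emerges, as the truant of a distinguished four-dimensional escalator forcing the largest critical value.

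For a non-leaf escalator $L$ of rank $n\ge 4$ it then suffices to show that $Q$ represents every \emph{eligible} integer, i.e.\ every $m$ not excluded by a congruence obstruction modulo the level $N$ of $L$. Here I would pass to the theta series $\theta_L(\tau)=\sum_{x\in L}q^{Q(x)}$, a modular form of weight $n/2$ on $\Gamma_0(N)$ with character, and decompose it as $\theta_L=E_L+C_L$ into its Eisenstein and cuspidal parts, so that $r_L(m)=a_E(m)+a_C(m)$. The Eisenstein coefficient is the genus average, a product of local densities, and for eligible $m$ it is bounded below by $\gg_\epsilon m^{\,n/2-1-\epsilon}$ away from the finitely many anisotropic primes. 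For $n\ge 4$ the weight $n/2\ge 2$ is integral, so Deligne's bound gives $|a_C(m)|\ll_L m^{\,n/4-1/2+\epsilon}$. Since $n/2-1>n/4-1/2$ for $n>2$, the Eisenstein main term dominates, whence $r_L(m)>0$ for every eligible $m$ above an explicit threshold $m_0(L)$; the finitely many eligible $m\le m_0(L)$ are then checked directly.

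The decisive obstacle is twofold, and it is what makes the theorem genuinely hard rather than formal. First, in the critical quaternary case ($n=4$, weight $2$) the Eisenstein lower bound degrades to $\asymp m^{1/2}$ exactly along integers divisible by high powers of a prime at which $L$ is anisotropic, where it ties in order of magnitude with the cuspidal bound $m^{1/2+\epsilon}$; resolving this borderline demands that the constant in $\|C_L\|$ be made fully \emph{effective}, which I would accomplish by bounding the cuspidal part through an explicit finite linear-algebra computation in the relevant space of cusp forms, pinning down $m_0(L)$ for each of the several thousand four-dimensional escalators. Second, the analysis at such anisotropic primes reduces the representation question to an associated ternary form, whose theta series has half-integral weight $3/2$; there Deligne's bound is unavailable and the naive cuspidal exponent exceeds the Eisenstein exponent, so one must instead invoke the deep subconvex estimate of Duke for Fourier coefficients of weight-$3/2$ cusp forms (via the Shimura lift), which alone guarantees representation of all large eligible integers in this range. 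Assembling the finiteness of the escalation tree, the effective Eisenstein-versus-cusp comparison in integral weight, the weight-$3/2$ analytic input, and the exhaustive machine verification of the bounded cases yields that representing the twenty-nine listed integers forces universality. The hard part throughout is effectivity: every implied constant must be computed, not merely shown to exist.
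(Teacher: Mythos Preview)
The paper does not prove this theorem at all: Theorem~\ref{thm:290} is stated purely as a quotation of the Bhargava--Hanke result, with a citation to their preprint, and is used only as motivation for the restricted statement in Theorem~\ref{thm:main}~(2). There is therefore no proof in the paper against which to compare your attempt.

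That said, your outline is a faithful high-level sketch of the actual Bhargava--Hanke argument: the escalation tree, the theta decomposition $\theta_L=E_L+C_L$, the effective Eisenstein lower bound versus the Deligne cuspidal upper bound in weight $\ge 2$, the anisotropic-prime reduction to ternary sublattices and the weight-$3/2$ difficulty resolved via Duke--Iwaniec, and the massive finite verification. As a description of strategy it is accurate. But it is only a strategy: nothing here constitutes a proof, since every one of the steps you label ``I would prove'' or ``is then checked directly'' hides either a large explicit computation (thousands of quaternary escalators, effective constants for each) or a deep external input you are merely invoking. For the purposes of this paper, the correct move is simply to cite \cite{Bhargava-Hanke} rather than to attempt a proof.
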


For the proof of Theorem \ref{thm:main}, 
following Ramanujan, 
we introduce 
\begin{equation*}
\varphi(q)=\sum_{n\in\mathbb{Z}} q^{n^2},  \,\,
\psi (q) =\sum_{n=0}^{\infty} q^{\frac{n(n+1)}{2}}, \,\,
a(q)=\sum_{m,n\in\mathbb{Z}} q^{m^2+mn+n^2}, \,\,(q\in\mathbb{C}, \,\,|q|<1), 
\end{equation*}
and use the following identities:
\begin{align}
&a(q)=a(q^4)+6q\psi (q^2)\psi (q^6).                                               \label{eqn:tri(1,3)}   \\ 
&\varphi(q)\varphi(q^3)=a(q^4)+2q\psi (q^2)\psi (q^6).                           \label{eqn:tri(1,3)-(2)}
\end{align}
For the proof of these formulas, 
see Berndt \cite[pp. 232]{Berndt} and Hirschhorn et al. \cite{Hirschhorn-Garvan-Borwein}.

\subsection*{Acknowledgments}

We are grateful to Professor Williams for informing us the ``The 290-theorem'' and, for his useful suggestions.

\section{Notations and preliminaries}
For the fixed positive integers $a,c$ and each $n\in\mathbb{N}_0,$  
we define
$$
f^a_c(x,y,z)=ax^2+c(y^2+yz+z^2),
$$
and 
$$
A^a_c(n)=\sharp
\left\{
(x,y,z)\in\mathbb{Z}^3 \, | \, n=f^a_c(x,y,z)
\right\}. 
$$
Moreover, for the fixed positive integers $\alpha, \beta, \gamma$ and each $n\in\mathbb{N}_0,$ 
we set 
\begin{align*}
r_{\alpha,\beta}(n)=&\sharp
\left\{
(x,y)\in\mathbb{Z}^2 \, | \,n=\alpha x^2+\beta y^2
\right\}, \\
t_{\alpha,\beta}(n)=&\sharp
\left\{
(x,y)\in\mathbb{N}_0^2 \, | \,n=\alpha t_x+\beta t_y 
\right\}, \\
r_{\alpha,\beta,\gamma}(n)=&\sharp
\left\{
(x,y,z)\in\mathbb{Z}^3 \, | \,n=\alpha x^2+\beta y^2+\gamma z^2 
\right\},  \\
m_{\alpha\text{-}\beta, \gamma}(n)
=&\sharp
\left\{
(x,y,z)\in\mathbb{Z}\times\mathbb{N}_0^2 \, | \,n=\alpha x^2+\beta t_y+\gamma t_z 
\right\}. \\
\end{align*}

\section{The case where $c=1$}

\begin{lemma}
\label{lem:c=1}
Suppose that $a\le b, \,\, c=1$ and $f^{a,b}_c(x,y,z,w), \,\,(x,y,z,w\in\mathbb{Z})$ 
represent any nonnegative integers $n\in\mathbb{N}_0.$ 
Then, $a=1,2.$ 
\end{lemma}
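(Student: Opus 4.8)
The plan is to show that if $a \geq 3$, then $f^{a,b}_1$ fails to represent some small nonnegative integer, regardless of the value of $b \geq a$. The key observation is that when $c = 1$, the ternary form $z^2 + zw + w^2$ controls which residues are "easily" reached, and the contributions $ax^2$ with $a \geq 3$ are too sparse to fill the gaps for small $n$. First I would recall that the ternary form $g(z,w) = z^2+zw+w^2$ represents exactly the nonnegative integers whose prime factorization contains every prime $p \equiv 2 \pmod 3$ to an even power; in particular $g$ does \emph{not} represent $2$, and among small integers it misses $2, 5, 6, 8, \dots$. So to represent $n = 2$ we need $ax^2 + by^2 = 2 - g(z,w)$ for some representable value of $g$; since $g(z,w) \in \{0,1,3,4,7,\dots\}$ and must be $\leq 2$, the only options are $g = 0$ or $g = 1$, forcing $ax^2 + by^2 \in \{1,2\}$. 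If $a \geq 3$, then $ax^2 + by^2$ with $x,y \in \mathbb{Z}$ can only equal $1$ or $2$ when $x = 0$, so we'd need $by^2 \in \{1,2\}$, i.e. $b = 1$ or $b = 2$ — but $a \leq b$ and $a \geq 3$ make this impossible. Hence $f^{a,b}_1$ fails to represent $n = 2$ when $a \geq 3$, giving $a \in \{1,2\}$.

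More carefully, I would organize the argument by testing a short list of small integers — $n = 1, 2$ suffice, but checking $n=1,2,3,5,6$ makes the structure transparent and dovetails with part (2) of the main theorem. For each such $n$, enumerate the finitely many ways to write $n = (ax^2 + by^2) + g(z,w)$ with $g(z,w)$ a value actually represented by $z^2+zw+w^2$ and $g(z,w) \leq n$; this reduces to the binary problem $ax^2 + by^2 = m$ for a handful of values $m \leq n$. The constraint $a \geq 3$ together with $1 \leq a \leq b$ then forces $x = y = 0$ in all relevant cases (since $ax^2 \geq 3$ and $by^2 \geq 3$ whenever $x$ or $y$ is nonzero), so $ax^2 + by^2 = 0$ is the only reachable value, leaving $g(z,w) = n$. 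Thus $a \geq 3$ would require $z^2 + zw + w^2$ to represent $n$ for \emph{every} $n \in \mathbb{N}_0$, which is false already at $n = 2$.

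The only mild obstacle is bookkeeping which residues the form $z^2+zw+w^2$ omits; this is classical (it is the norm form of $\mathbb{Z}[\omega]$, $\omega = e^{2\pi i/3}$, so it represents $n$ iff $n$ has no prime factor $p \equiv 2 \pmod 3$ to an odd power), and I would simply cite it or verify directly that $2$ is not of the form $z^2+zw+w^2$ by the finite check $z, w \in \{-1,0,1\}$. With that in hand the lemma follows immediately: $a \geq 3$ contradicts representability of $n = 2$, so $a \in \{1, 2\}$, as claimed. I expect no serious difficulty here — the content is entirely in the sparseness of $\{ax^2 : x \in \mathbb{Z}\}$ for $a \geq 3$ versus the small gaps left by the Eisenstein norm form.
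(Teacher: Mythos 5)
Your proposal is correct and follows essentially the same route as the paper: the paper's entire proof is the one-line observation that $2$ is not represented by $z^2+zw+w^2$, with the implicit point (which you spell out explicitly) that $a\ge 3$ and $b\ge a$ force $x=y=0$ in any representation of $n=2$. Your additional discussion of the Eisenstein norm form is harmless but not needed; the finite check $z,w\in\{-1,0,1\}$ suffices.
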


\begin{proof}
The lemma follows from the fact that 
$n=2$ cannot be written as $z^2+zw+w^2$ with $z,w\in\mathbb{Z}.$ 
\end{proof}

\subsection{The case where $a=1$}

We use the following result of Dickson \cite[pp. 112-113]{Dickson-2}:

\begin{lemma}
\label{lem-1,1,3}
{\it
$n\in\mathbb{N}_0$ can be written as $x^2+y^2+3z^2$ with $x,y,z\in\mathbb{Z}$ 
if and only if $n\neq 9^k(9l+6), \,\, (k,l\in\mathbb{N}_0).$ 
}
\end{lemma}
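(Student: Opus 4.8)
The plan is to derive the lemma from the local theory of the ternary form $q(x,y,z)=x^2+y^2+3z^2$, using the classical fact that $q$ is alone in its genus (equivalently, that $q$ is a regular ternary form in Dickson's sense): then a positive integer is represented by $q$ over $\Z$ if and only if it is represented over $\R$ and over $\Z_p$ for every prime $p$. I would split the argument into the easy necessity direction, the local computations for sufficiency, and the appeal to regularity.

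Necessity is elementary and needs no genus theory. If $n=x^2+y^2+3z^2$ with $3\mid n$, then reducing mod $3$ and using that $-1$ is a non-square mod $3$ forces $x\equiv y\equiv 0\pmod 3$; writing $x=3x_1,\ y=3y_1$ gives $3x_1^2+3y_1^2+z^2\equiv 0\pmod 3$, hence $3\mid z$, say $z=3z_1$, and dividing by $9$ yields $x_1^2+y_1^2+3z_1^2=n/9$. Thus $q$ represents $9N$ if and only if it represents $N$, so it suffices to note that $x^2+y^2+3z^2=9l+6$ is impossible: it again forces $x\equiv y\equiv 0\pmod 3$ and then $3z^2\equiv 6\pmod 9$, i.e.\ $z^2\equiv 2\pmod 3$, a contradiction. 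Hence no $9^k(9l+6)$ is represented by $q$.

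For the converse I would carry out the local analysis. Over $\R$ every nonnegative real is represented, and for each prime $p\neq 3$ the form $q$ is unimodular over $\Z_p$ and isotropic over $\Q_p$ (for $p$ odd this is automatic, and for $p=2$ one checks that $-3$ is a sum of two squares in $\Q_2$), so $q$ represents every element of $\Z_p$. The only genuine constraint is at $p=3$: running the scaling reduction of the previous paragraph $3$-adically, one finds that, writing $n=3^a m$ with $3\nmid m$, the equation $q=n$ is solvable over $\Z_3$ precisely when $a$ is even, or $a$ is odd and $m\equiv 1\pmod 3$; the complementary set of $3$-adically non-represented integers is exactly $\{\,9^k(9l+6):k,l\in\mathbb{N}_0\,\}$. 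Combining single-classness of the genus of $q$ with the Hasse principle for genera of ternary forms then shows $q$ represents every $n$ outside this set, completing the proof.

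The main obstacle is the structural input in the last step, namely that $q=x^2+y^2+3z^2$ is alone in its genus. One can quote this from the classification of regular ternary forms (or verify it by a Minkowski--Siegel mass computation together with reduction theory), after which everything else is routine congruence bookkeeping. If one prefers to avoid genus theory entirely, the converse direction can instead be obtained by Dickson's original descent, reducing the representability of an arbitrary admissible $n$ to finitely many small cases by directly manipulating solutions of $x^2+y^2+3z^2=n$; this is more elementary but considerably longer.
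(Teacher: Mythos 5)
The paper does not actually prove this lemma: it is imported verbatim as a known result of Dickson \cite[pp.~112--113]{Dickson-2}, so there is no internal proof to compare yours against, and your local--global argument genuinely fills that gap. Your necessity direction is correct (the mod~$3$ descent showing $x^2+y^2+3z^2$ represents $9N$ iff it represents $N$, together with the impossibility of $9l+6$), and the local computations check out: $\langle 1,1,3\rangle$ is universal over $\Z_p$ for every $p\neq 3$ (unimodular of rank $3$ for odd $p$; for $p=2$ a residue check mod $8$ with a unit coordinate plus Hensel), while at $p=3$ the iterated descent shows the $3$-adically excluded integers are exactly those $3^{2k+1}m$ with $3\nmid m$ and $m\equiv 2\pmod 3$, which is precisely the set $9^k(9l+6)$. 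The one substantive input you must import is that $x^2+y^2+3z^2$ is alone in its genus, so that local representability implies global representability; this is true and classical, but it carries all the arithmetic weight, so in effect you have traded the paper's citation of Dickson's representability theorem for a citation of a one-class-genus (or regularity) fact. Your route buys a transparent explanation of where the excluded progression comes from -- it is purely a $3$-adic obstruction -- at the cost of genus theory; the paper's route buys brevity. If you keep your version, state the single-class hypothesis explicitly with a precise reference rather than the parenthetical ``one can quote this,'' since without it the sufficiency direction is not proved.
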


By Lemma \ref{lem-1,1,3}, we have the following proposition:

\begin{proposition}
\label{prop:a=1,c=1}
{\it
$n\in\mathbb{N}_0$ can be written as $x^2+(y^2+yz+z^2)$ with $x,y,z\in\mathbb{Z}$ 
if and only if $n\neq 9^k(9l+6), \,\,(k,l\in\mathbb{N}_0).$
}
\end{proposition}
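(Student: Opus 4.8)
The plan is to reduce the ternary form $x^2 + (y^2 + yz + z^2)$ to the form $x^2 + y^2 + 3z^2$ covered by Lemma~\ref{lem-1,1,3}, by exploiting the classical identity relating the norm form of the Eisenstein integers to a diagonal form. First I would recall that $y^2 + yz + z^2 = \tfrac{1}{4}\big((2y+z)^2 + 3z^2\big)$, so that after the substitution $u = 2y+z$ the expression $4(y^2+yz+z^2)$ equals $u^2 + 3z^2$ with the congruence constraint $u \equiv z \pmod 2$. Thus $n$ is represented by $x^2 + (y^2+yz+z^2)$ if and only if $4n$ is represented by $(2x)^2 + u^2 + 3z^2$ with $u \equiv z \pmod 2$; the key point I must verify is that this parity restriction can always be met whenever $4n$ is represented by $X^2 + u^2 + 3z^2$ at all, so that the representability of $n$ by the Eisenstein-augmented form is \emph{equivalent} to the representability of $4n$ by $x^2 + y^2 + 3z^2$.

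The main work is this congruence bookkeeping. Given a representation $4n = X^2 + u^2 + 3z^2$, I would argue modulo small powers of $2$: since $4n \equiv 0 \pmod 4$, the three squares $X^2, u^2, 3z^2$ must have residues summing to $0 \bmod 4$, which (as squares are $0$ or $1$ mod $4$, and $3z^2$ is $0$ or $3$) forces $X$ even and $u \equiv z \pmod 2$, or else all three of $X,u,z$ even. In the first case we are done directly (set $2x = X$, $2y + z = u$, which is solvable for integer $y$ precisely because $u \equiv z \pmod 2$); in the second case all variables are even and we may divide the whole equation by $4$ and induct, or simply note $n$ itself is then represented. Either way, the equivalence holds, and then by Lemma~\ref{lem-1,1,3} the form $x^2+(y^2+yz+z^2)$ fails to represent exactly those $n$ with $4n = 9^k(9l+6)$. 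It remains to translate the exceptional set: since $\gcd(4,9)=1$, $4n$ is of the form $9^k(9\ell+6)$ if and only if $4n = 9^k \cdot m$ with $m \equiv 6 \pmod 9$, and because $4$ is invertible mod $9$ this happens iff $n$ itself has the shape $9^k(9l+6)$ — one checks $4 \cdot 6 \equiv 24 \equiv 6 \pmod 9$, so multiplication by $4$ permutes the residue class $6 \bmod 9$ among the admissible "bad" classes, and more carefully $n \equiv 6 \pmod 9 \iff 4n \equiv 6 \pmod 9$.

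The step I expect to be the genuine obstacle is the last congruence translation: one has to be careful that $n \equiv 6 \pmod 9$ is stable under multiplication by $4$ and that no \emph{new} exceptions are introduced by the factor $4$ — i.e. that $4n = 9^k(9l+6)$ forces $k \ge 0$ to "see through" the $4$ and land $n$ back in the same family rather than, say, in a class like $3 \bmod 9$ or $0 \bmod 9$. Because $v_3(4n) = v_3(n)$ and $4$ is a unit mod $9$, this is clean: $4n/9^{v_3(n)} \equiv 6 \pmod 9 \iff n/9^{v_3(n)} \equiv 6 \cdot 4^{-1} \equiv 6 \cdot 7 \equiv 42 \equiv 6 \pmod 9$, so the exceptional set is literally unchanged. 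I would present this as a short lemma and then the proposition follows immediately by combining it with Lemma~\ref{lem-1,1,3}.
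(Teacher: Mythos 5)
Your proof is correct, but it takes a genuinely different and more elementary route than the paper. The paper derives the proposition from the theta-function identities (\ref{eqn:tri(1,3)}) and (\ref{eqn:tri(1,3)-(2)}): multiplying both by $\varphi(q)$ shows that the generating functions of $A^1_1(n)$ and of $r_{1,1,3}(n)$ each equal $\sum A^1_4(n)q^n$ plus a positive multiple of $q\sum m_{1\text{-}2,6}(N)q^N$, so $A^1_1(n)>0$ if and only if $r_{1,1,3}(n)>0$, and Lemma~\ref{lem-1,1,3} finishes. You instead use the completing-the-square identity $4(y^2+yz+z^2)=(2y+z)^2+3z^2$ to show that $n$ is represented by $x^2+(y^2+yz+z^2)$ if and only if $4n$ is represented by $X^2+u^2+3z^2$, and then translate the exceptional set $9^k(9l+6)$ through multiplication by $4$. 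Both the parity argument and the translation check out: $4$ is a unit mod $9$ and $v_3(4n)=v_3(n)$, so the set $\{9^k(9l+6)\}$ is stable under $n\mapsto 4n$. Your approach buys independence from the identities (\ref{eqn:tri(1,3)}) and (\ref{eqn:tri(1,3)-(2)}) and is self-contained modulo Lemma~\ref{lem-1,1,3}; the paper's theta-function method is the one that transfers uniformly to the later cases (Propositions \ref{prop:a=2,c=1}, \ref{prop:a=1,c=3} and \ref{prop:a=1,c=4}), where your elementary reduction would have to be reworked each time. Two small points to tighten: your mod-$4$ case analysis omits the admissible pattern in which $X$ is odd, $u$ is even and $z$ is odd, which you must dispose of by swapping the symmetric variables $X$ and $u$ before setting $2x=X$; and in the final paragraph $9^{v_3(n)}$ should be the largest power of $9$ (not of $3$) dividing $n$, since $v_3\bigl(9^k(9l+6)\bigr)=2k+1$ is odd. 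Neither issue affects the correctness of the argument.
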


\begin{proof}
Multiplying both sides of equations (\ref{eqn:tri(1,3)}) and (\ref{eqn:tri(1,3)-(2)}) by $\varphi(q),$ 
we have 
\begin{align}
\varphi(q)a(q)=&\varphi(q)a(q^4)+6q\varphi(q)\psi (q^2)\psi (q^6),  \\
\varphi(q)^2\varphi(q^3)=&\varphi(q)a(q^4)+2q\varphi(q)\psi (q^2)\psi (q^6). 
\end{align}
which implies that 
\begin{align*}
\sum_{n=0}^{\infty} A^1_1(n) q^n=&\sum_{n=0}^{\infty} A^1_4(n) q^n +6q \sum_{N=0}^{\infty} m_{1\text{-}2,6}(N)q^N,  \\
\sum_{n=0}^{\infty} r_{1,1,3}(n) q^n=&\sum_{n=0}^{\infty} A^1_4(n) q^n +2q \sum_{N=0}^{\infty} m_{1\text{-}2,6}(N)q^N.                                         
\end{align*}
Therefore, 
it follows that
\begin{align*}
n\neq 9^k(9l+6), \,\,(k,l\in\mathbb{N}_0) &\Longleftrightarrow r_{1,1,3}(n)>0  \\
                                                     &\Longleftrightarrow A^1_4(n)>0 \,\textrm{or} \, m_{1\text{-}2,6}(n-1)>0,  \\
                                                     &\Longleftrightarrow A^1_1(n)>0,
\end{align*}
which proves the proposition. 
\end{proof}

By Proposition \ref{prop:a=1,c=1}, we obtain the following theorem:

\begin{theorem}
\label{thm:a=1,c=1}
{\it
$f^{1,b}_1$ 
can represent any nonnegative integers $n\in\mathbb{N}_0$ 
if and only if $b=1,2,3,4,5,6.$
}
\end{theorem}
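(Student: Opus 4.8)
The plan is to leverage Proposition~\ref{prop:a=1,c=1}, which characterizes exactly which integers $f^{1,1}_1$ represents (namely all $n\in\mathbb{N}_0$ except those of the form $9^k(9l+6)$), and then to run a two-part argument: first showing that $b=1,2,3,4,5,6$ are sufficient, and then showing that $b\ge 7$ fails.

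For the sufficiency direction, I would note that $f^{1,b}_1(x,y,z,w)=x^2+by^2+(z^2+zw+w^2)$, and for each $b\in\{1,2,3,4,5,6\}$ I would argue that every $n\in\mathbb{N}_0$ can be reached. The cleanest route is: given $n$, try to choose $y$ so that $n-by^2$ is \emph{not} of the excluded shape $9^k(9l+6)$, since then Proposition~\ref{prop:a=1,c=1} guarantees $n-by^2$ is represented by $x^2+(z^2+zw+w^2)$. For $b=1$ this is immediate from the proposition itself (take $y=0$, or $y=1$ to dodge the bad residues, exploiting that $n$ and $n-1$ cannot both be $\equiv 6\pmod 9$). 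For $b=2,3,\dots,6$ one runs the same kind of residue-chasing modulo $9$ (and modulo powers of $9$), checking that among $y=0,1,2,\dots$ one always lands on a value $n-by^2$ avoiding $9^k(9l+6)$; this is a finite case analysis in each $b$. Equivalently, and perhaps more in the spirit of the rest of the paper, I would expand $\varphi(q)\varphi(q^b)\,a(q)$ using the same identities \eqref{eqn:tri(1,3)} and \eqref{eqn:tri(1,3)-(2)} combined with known results on $x^2+by^2+3z^2$, but the elementary argument via Proposition~\ref{prop:a=1,c=1} is more direct.

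For the necessity direction, I would exhibit, for each $b\ge 7$, a specific small nonnegative integer not represented by $f^{1,b}_1$. Since $by^2$ already exceeds most small targets unless $y=0$, the form essentially reduces to $x^2+(z^2+zw+w^2)$ plus the option of adding $b,4b,9b,\dots$. The obvious candidate is $n=6$: we need $6=x^2+by^2+(z^2+zw+w^2)$, but $z^2+zw+w^2$ never equals $2$ or $6$ and takes values $0,1,3,4,\dots$, while $x^2\in\{0,1,4\}$ and (for $b\ge 7$) $by^2\in\{0\}$ in the relevant range; checking the finitely many combinations shows $6$ is not represented once $b\ge 7$. (One should double-check $b=7$: here $by^2=7>6$, so indeed only $x^2+(z^2+zw+w^2)$ is available, and $6$ is of the excluded form $9^0(9\cdot 0+6)$, so by Proposition~\ref{prop:a=1,c=1} it is not represented.) Thus $6$ already rules out every $b\ge 7$, completing the "only if" part.

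The main obstacle is the sufficiency argument for the larger values $b=4,5,6$: one must verify that the excluded set $\{9^k(9l+6)\}$ can always be avoided by a suitable choice of $y$, and this requires being careful about the nested powers of $9$ rather than just working modulo $9$ once. Concretely, if $n-by^2=9^k(9l+6)$ one must show some other admissible $y'$ gives $n-by^{\prime 2}$ not of that form; handling the interaction between the multiplicative structure of $9^k$ and the additive shifts $b(y^2-y^{\prime 2})$ is the delicate point, though it remains a bounded computation for each fixed $b$.
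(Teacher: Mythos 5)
Your overall strategy is exactly the paper's: invoke Proposition~\ref{prop:a=1,c=1} and, for each excluded $n=9^k(9l+6)$, produce a $y$ with $n-by^2$ outside the excluded set; and your necessity argument via $n=6$ (for $b\ge 7$ only $y=0$ is available, and $6$ is excluded) is complete and matches what the paper leaves implicit. The problem is that the sufficiency half is never carried out: the entire content of the theorem beyond Proposition~\ref{prop:a=1,c=1} is precisely the case analysis you defer as ``a finite case analysis in each $b$,'' and your description of it as a bounded search over $y=0,1,2,\dots$ is not accurate for $b=3$. Indeed, if $n=9^k(9l+6)$ with $k\ge 1$, then for every $y$ with $3\nmid y$ one has $3y^2\equiv 3\pmod 9$, hence $n-3y^2\equiv 6\pmod 9$ is again excluded; no fixed finite list of $y$'s works uniformly in $k$, and one must take $y=3^k$, giving $n-3\cdot 9^k=3^{2k+1}(3l+1)$, which is not excluded because its prime-to-$3$ part is $\equiv 1\pmod 3$. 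So the gap is real, though fillable. (Your $b=1$ justification is also slightly off: ``$n$ and $n-1$ cannot both be $\equiv 6\pmod 9$'' ignores the excluded numbers divisible by $9$; the correct observation is that every excluded number is divisible by $3$, so consecutive integers cannot both be excluded.)

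To close the argument along your lines: since every excluded number equals $3^{2k+1}u$ with $u\equiv 2\pmod 3$ and the smallest is $6$, for $b\in\{1,2,4,5\}$ the choice $y=1$ gives $n-b\not\equiv 0\pmod 3$, which is never excluded. For $b=6$: if $n\equiv 0\pmod 9$ is excluded (so $n\ge 54$), then $n-6\equiv 3\pmod 9$ has $3$-adic valuation $1$ with unit $\equiv 1\pmod 3$, hence is not excluded; if $n=9l+6$, take $y=1$ when $3\nmid l$ (then $9l$ is not excluded) and $y=2$ when $3\mid l$ and $l\ge 2$ (then $n-24=9(l-2)$ with $l-2\not\equiv 0,6\pmod 9$), the cases $l=0,1$ being immediate. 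For $b=3$ use $y=3^k$ as above. I would add that your worry about the nested powers of $9$ is well placed: the paper's own sufficiency argument reduces to representing $n=9l+6$ only, i.e.\ the $k=0$ stratum, so a version of your proof that handles $k\ge 1$ explicitly would actually be more complete than the printed one.
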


\begin{proof}
The ``only if'' direction follows from Proposition \ref{prop:a=1,c=1}. 
In order to establish the ``if'' direction, 
we have only to prove that $f^{1,b}_1$ represents $n=9l+6, \,\,(l\in\mathbb{N}_0).$ 
\par
When $b=1,2,3,4,5,$ taking $y=1,$ 
we have 
$$
n-by^2=9l+6-b\cdot1^2\equiv 5, 4,3,2, 1 \,\,\mathrm{mod}\,9,
$$
which can be written as $x^2+(z^2+zw+w^2)$ with $x,z\in\mathbb{Z}.$ 
\par
Suppose that $b=6.$ 
If $n=6, 15,$ or $l\equiv 2, 8 \,\mathrm{mod} \,9,$ 
taking $y=1,$ 
we obtain 
$$
n-6\cdot 1^2=9l, \,\,(l=0,1 \,\textrm{or} \,l\equiv 2, 8 \,\mathrm{mod} \,9),
$$
which can be represented as $x^2+(z^2+zw+w^2)$ with $x,z\in\mathbb{Z}.$ 
\par
If $l\ge 2$ and $l \not\equiv 2, 8 \,\mathrm{mod} \,9,$ 
taking $y=2,$ 
we have 
$$
n-6\cdot 2^2=9l+6-24=9(l-2),
$$
which can be written as $x^2+(z^2+zw+w^2)$ with $x,z\in\mathbb{Z}.$
\end{proof}

\subsection{The case where $a=2$}

We use the following result of Dickson \cite{Dickson-1}: 

\begin{lemma}
\label{lem-1,2,3}
{\it
\quad
\begin{enumerate}\itemsep=0pt
\item[$(1)$] 
$n\in\mathbb{N}_0$ can be written as 
$x^2+2y^2+3z^2$ with $x,y,z\in\mathbb{Z}$ if and only if 
$n\neq  4^k(16l+10), \,\,(k,l\in\mathbb{N}_0).$ 
\item[$(2)$] 
$n\in\mathbb{N}_0$ can be written as 
$x^2+2(y^2+yz+z^2)$ with $x,y,z\in\mathbb{Z}$ if and only if 
$n\neq 4^k(8l+5), \,\,(k,l\in\mathbb{N}_0).$ 
\end{enumerate}
}
\end{lemma}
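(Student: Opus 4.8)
## Proof proposal for Lemma \ref{lem-1,2,3}

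The plan is to prove both statements by the same method used in Proposition \ref{prop:a=1,c=1}: translate each ternary representation problem into an identity of theta series and read off the representable integers from known formulas. Both parts are classical (due to Dickson), so the task here is really to locate the right generating-function identities rather than to discover the excluded arithmetic progressions from scratch.

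For part (1), the form $x^2+2y^2+3z^2$ has generating function $\varphi(q)\varphi(q^2)\varphi(q^3)$, i.e. $\sum_{n\ge 0} r_{1,2,3}(n)q^n = \varphi(q)\varphi(q^2)\varphi(q^3)$. First I would recall (or cite Dickson \cite{Dickson-1}) the genus-theory / modular-forms computation showing that this form, together with the other form in its genus, represents exactly the integers $n$ with $n\ne 4^k(16l+10)$. The key structural point is that $x^2+2y^2+3z^2$ is alone in its genus except for issues at the prime $2$, so the local conditions at $2$ produce precisely the forbidden set $4^k(16l+10)$; I would verify the $2$-adic and $3$-adic local densities are positive off this set and invoke the fact that for this small discriminant the genus has class number one in the relevant sense. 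An alternative, more in the spirit of the paper, is to multiply the identities \eqref{eqn:tri(1,3)} and \eqref{eqn:tri(1,3)-(2)} by $\varphi(q^2)$ and compare $\varphi(q^2)a(q)$ with $\varphi(q^2)\varphi(q)\varphi(q^3)$, relating $r_{1,2,3}$ to a mixed sum involving $m_{2\text{-}2,6}$ or similar; this reduces the claim about $x^2+2y^2+3z^2$ to one about $2x^2+3(y^2+yz+z^2)$ plus a triangular-number piece.

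For part (2), the form $x^2+2(y^2+yz+z^2)$ has generating function $\varphi(q)a(q^2)$, so $\sum_{n\ge 0} A^2_1(n) q^n = \varphi(q)a(q^2)$ in the notation $A^a_c$. Here I would apply the identities from the introduction with $q$ replaced by $q^2$: from \eqref{eqn:tri(1,3)} we get $a(q^2)=a(q^8)+6q^2\psi(q^4)\psi(q^{12})$, and from \eqref{eqn:tri(1,3)-(2)} we get $\varphi(q^2)\varphi(q^6)=a(q^8)+2q^2\psi(q^4)\psi(q^{12})$. Multiplying through by $\varphi(q)$ and comparing coefficients expresses $A^2_1(n)$ in terms of the coefficients of $\varphi(q)\varphi(q^2)\varphi(q^6)$ (the form $x^2+2y^2+6z^2$) and of $\varphi(q)a(q^8)$, plus mixed square-triangular terms. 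One then needs the classical determination of which $n$ are represented by $x^2+2y^2+6z^2$ — again a one-class-per-genus statement away from the prime $2$ — giving the excluded set $4^k(8l+5)$; the chain of equivalences $n\ne 4^k(8l+5) \Leftrightarrow r_{1,2,6}(n)>0 \Leftrightarrow (\text{mixed term})>0 \Leftrightarrow A^2_1(n)>0$ then closes the argument exactly as in Proposition \ref{prop:a=1,c=1}.

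The main obstacle will be establishing the two auxiliary ternary facts — that $x^2+2y^2+3z^2$ and $x^2+2y^2+6z^2$ represent precisely the integers outside $4^k(16l+10)$ and $4^k(8l+5)$ respectively — since these are exactly the kind of "one class per genus" results whose proofs require either a genuine local-density computation at $2$ and $3$ together with a Siegel–Weil or Minkowski–Siegel mass-formula argument, or a direct appeal to Dickson's tables. I expect to simply cite Dickson \cite{Dickson-1} for these, since reproving them is orthogonal to the paper's goal; the only genuinely new work is the bookkeeping with the theta identities, which is routine once the substitution $q\mapsto q^2$ and the multiplication by $\varphi(q)$ are carried out carefully, watching the shifts in the exponents of $q$ coming from the $6q$ and $2q$ (now $6q^2$ and $2q^2$) factors.
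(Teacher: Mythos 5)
The paper gives no proof of this lemma at all: both parts are quoted verbatim from Dickson \cite{Dickson-1}, so the honest comparison is between your sketch and a bare citation. For part (1) you end exactly where the paper starts, with a citation (your genus-theory outline --- ``verify the local densities, invoke one class per genus'' --- is a plan rather than a proof, and is precisely the content Dickson supplies); note also that your proposed alternative of multiplying \eqref{eqn:tri(1,3)} and \eqref{eqn:tri(1,3)-(2)} by $\varphi(q^2)$ cannot establish (1) independently, since the even-indexed coefficients of $\varphi(q^2)a(q^4)$ count $x^2+2(y^2+yz+z^2)$, not $2x^2+3(y^2+yz+z^2)$ as you wrote, so that identity only ties part (1) to part (2) (it is exactly what the paper later exploits, in the opposite direction, in Proposition \ref{prop:a=2,c=1}). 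Your treatment of part (2) is the genuinely different and worthwhile piece: substituting $q\mapsto q^2$ in \eqref{eqn:tri(1,3)} and \eqref{eqn:tri(1,3)-(2)}, multiplying by $\varphi(q)$, and comparing $\varphi(q)a(q^2)$ with $\varphi(q)\varphi(q^2)\varphi(q^6)$ correctly yields $A^1_2(n)>0 \Longleftrightarrow r_{1,2,6}(n)>0$ by positivity of all coefficients, in perfect parallel with Proposition \ref{prop:a=1,c=1}. What this buys is a reduction of the non-diagonal form $x^2+2(y^2+yz+z^2)$ to the diagonal form $x^2+2y^2+6z^2$, which is useful if one prefers to cite only results on diagonal ternaries; what it does not buy is self-containedness, since the classification of $x^2+2y^2+6z^2$ (excluded set $4^k(8l+5)$) is an external input of exactly the same nature and must itself be taken from Dickson's tables. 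One notational correction: in the paper's conventions the counting function of $x^2+2(y^2+yz+z^2)$ is $A^1_2$, not $A^2_1$; the latter counts $2x^2+(y^2+yz+z^2)$ and has generating function $\varphi(q^2)a(q)$.
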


By Lemma \ref{lem-1,2,3}, we have the following proposition:

\begin{proposition}
\label{prop:a=2,c=1}
{\it
$n\in\mathbb{N}_0$ can be written as $2x^2+(y^2+yz+z^2)$ with $x,y,z\in\mathbb{Z}$ 
if and only if $n\neq4^k(16l+10), \,\,(k,l\in\mathbb{N}_0).$
}
\end{proposition}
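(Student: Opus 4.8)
The plan is to imitate the proof of Proposition~\ref{prop:a=1,c=1}, replacing the multiplier $\varphi(q)$ by $\varphi(q^2)$ so that the ambient form becomes $2x^2+(y^2+yz+z^2)$. Concretely, I would multiply both of the identities (\ref{eqn:tri(1,3)}) and (\ref{eqn:tri(1,3)-(2)}) by $\varphi(q^2)$ to get
\begin{align*}
\varphi(q^2)a(q)&=\varphi(q^2)a(q^4)+6q\,\varphi(q^2)\psi(q^2)\psi(q^6),\\
\varphi(q^2)\varphi(q)\varphi(q^3)&=\varphi(q^2)a(q^4)+2q\,\varphi(q^2)\psi(q^2)\psi(q^6),
\end{align*}
and then read off the $q$-expansions using the notation of Section~2. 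Since $\varphi(q^2)a(q)$, $\varphi(q^2)a(q^4)$, $\varphi(q^2)\varphi(q)\varphi(q^3)$ and $\varphi(q^2)\psi(q^2)\psi(q^6)$ are the generating functions of $A^2_1$, $A^2_4$, $r_{1,2,3}$ and $m_{2\text{-}2,6}$, respectively, this yields
\begin{align*}
\sum_{n=0}^{\infty}A^2_1(n)q^n&=\sum_{n=0}^{\infty}A^2_4(n)q^n+6q\sum_{N=0}^{\infty}m_{2\text{-}2,6}(N)q^N,\\
\sum_{n=0}^{\infty}r_{1,2,3}(n)q^n&=\sum_{n=0}^{\infty}A^2_4(n)q^n+2q\sum_{N=0}^{\infty}m_{2\text{-}2,6}(N)q^N.
\end{align*}

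Comparing coefficients of $q^n$ then gives $A^2_1(n)=A^2_4(n)+6\,m_{2\text{-}2,6}(n-1)$ and $r_{1,2,3}(n)=A^2_4(n)+2\,m_{2\text{-}2,6}(n-1)$, where in each line both summands are nonnegative. Hence $A^2_1(n)>0$ and $r_{1,2,3}(n)>0$ are each equivalent to the single condition ``$A^2_4(n)>0$ or $m_{2\text{-}2,6}(n-1)>0$''. Feeding in Lemma~\ref{lem-1,2,3}$(1)$, which characterizes the positivity of $r_{1,2,3}(n)$, I obtain the chain
\begin{align*}
n\neq 4^k(16l+10), \,\,(k,l\in\mathbb{N}_0) &\Longleftrightarrow r_{1,2,3}(n)>0 \\
&\Longleftrightarrow A^2_4(n)>0 \,\textrm{or}\, m_{2\text{-}2,6}(n-1)>0 \\
&\Longleftrightarrow A^2_1(n)>0,
\end{align*}
which is precisely the statement of the proposition.

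I do not expect a genuine obstacle: the analytic content is already packaged into the identities (\ref{eqn:tri(1,3)})--(\ref{eqn:tri(1,3)-(2)}) and into Lemma~\ref{lem-1,2,3}, and the rest is bookkeeping. The one place needing a little care is matching the product series $\varphi(q^2)a(q^4)$ and $\varphi(q^2)\psi(q^2)\psi(q^6)$ with the counting functions $A^2_4$ and $m_{2\text{-}2,6}$ (in particular tracking the index shift caused by the factor $q$), and invoking part $(1)$ rather than part $(2)$ of Lemma~\ref{lem-1,2,3}. As a sanity check, every excluded integer $4^k(16l+10)$ is even, so $n-1$ is odd and $m_{2\text{-}2,6}(n-1)=0$ automatically, since the form $2x^2+2t_y+6t_z$ represents only even integers; this forces $A^2_4(n)=r_{1,2,3}(n)=0$ as well, consistent with $A^2_1$ vanishing on that set.
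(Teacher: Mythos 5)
Your proposal is correct and follows essentially the same route as the paper: multiply the identities (\ref{eqn:tri(1,3)}) and (\ref{eqn:tri(1,3)-(2)}) by $\varphi(q^2)$ and invoke Lemma \ref{lem-1,2,3}(1). The only (cosmetic) difference is in reading off coefficients: the paper re-indexes the even series as $\sum A^1_2(N)q^{2N}$ and $\sum m_{1\text{-}1,3}(N)q^{2N}$ and splits into the cases $n=2N$ and $n=2N+1$, whereas you keep the counts as $A^2_4(n)$ and $m_{2\text{-}2,6}(n-1)$ and argue via nonnegativity of the two summands, exactly as in the paper's proof of Proposition \ref{prop:a=1,c=1}.
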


\begin{proof}
Multiplying both sides of equations (\ref{eqn:tri(1,3)}) and (\ref{eqn:tri(1,3)-(2)}) by $\varphi(q^2),$ 
we have 
\begin{align}
\varphi(q^2)a(q)=&\varphi(q^2)a(q^4)+6q\varphi(q^2)\psi (q^2)\psi (q^6),  \\
\varphi(q) \varphi(q^2) \varphi(q^3)=&\varphi(q^2)a(q^4)+2q\varphi(q^2)\psi (q^2)\psi (q^6). 
\end{align}
which implies that 
\begin{align*}
\sum_{n=0}^{\infty} A^2_1(n) q^n=&\sum_{N=0}^{\infty} A^1_2(N) q^{2N} +6q \sum_{N=0}^{\infty} m_{1\text{-}1,3}(N)q^{2N},  \\
\sum_{n=0}^{\infty} r_{1,2,3}(n) q^n=&\sum_{N=0}^{\infty} A^1_2(N) q^{2N} +2q \sum_{N=0}^{\infty} m_{1\text{-}1,3}(N)q^{2N}.                                         
\end{align*}
\par
Suppose that $n$ is even and $n=2N.$ 
Therefore, 
it follows that
\begin{align*}
n\neq 4^k(16l+10), \,\,(k,l\in\mathbb{N}_0) &\Longleftrightarrow r_{1,2,3}(n)>0  \\
                                                     &\Longleftrightarrow A^1_2(N)>0  \\
                                                     &\Longleftrightarrow  A^2_1(n)>0. 
\end{align*}
\par
Suppose that $n$ is odd and $n=2N+1.$ 
Therefore, 
it follows that
\begin{align*}
n\neq 4^k(16l+10), \,\,(k,l\in\mathbb{N}_0) &\Longleftrightarrow r_{1,2,3}(n)>0  \\
                                                     &\Longleftrightarrow m_{1\text{-}1,3}(N)>0  \\
                                                     &\Longleftrightarrow  A^2_1(n)>0. 
\end{align*}
\end{proof}

By Proposition \ref{prop:a=2,c=1}, we obtain the following theorem:

\begin{theorem}
\label{thm:a=2,c=1}
{\it
Suppose that $2\le b.$  
Then, 
$f^{2,b}_1$ 
can represent any nonnegative integers $n\in\mathbb{N}_0$ 
if and only if $b=2,3,4,5,6,7,8,9,10.$
}
\end{theorem}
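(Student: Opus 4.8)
The structure of this proof should mirror exactly the proof of Theorem~\ref{thm:a=1,c=1}. First, for the ``only if'' direction, I would invoke Proposition~\ref{prop:a=2,c=1}: since $f^{2,b}_1$ contains $2x^2+(z^2+zw+w^2)$ as a subform (set $y=0$), and any value represented by $f^{2,b}_1$ with $y=0$ must avoid the set $4^k(16l+10)$, the genuine obstruction comes from whether adding the term $by^2$ lets us catch all the missing integers $4^k(16l+10)$. To pin down the upper bound $b \le 10$, I would locate the smallest missing value $n=10$ (the case $k=l=0$): if $f^{2,b}_1$ represents $10$, then either $y=0$ (impossible, since $10$ is of the forbidden form), or $by^2 \le 10$ with $y \ne 0$, forcing $b \in \{1,\dots,10\}$ after checking that $10 - b$ must itself be representable as $2x^2 + (z^2+zw+w^2)$; running through $b=7,8,9,10$ one checks $10-b = 3,2,1,0$ are all representable, while $b \ge 11$ is immediately excluded. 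Combined with $b \ge 2$ (the hypothesis $a \le b$ with $a=2$), this gives the range $2 \le b \le 10$.

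For the ``if'' direction, I would fix $b \in \{2,3,\dots,10\}$ and show $f^{2,b}_1$ represents every $n$ of the form $4^k(16l+10)$, since by Proposition~\ref{prop:a=2,c=1} these are the only integers not already represented with $y=0$. The key observation is a reduction modulo small powers of $2$ (or mod $16$): choosing $y$ appropriately, I want $n - by^2$ to fall outside the forbidden set $4^k(16l+10)$, hence representable as $2x^2+(z^2+zw+w^2)$ by Proposition~\ref{prop:a=2,c=1}. For most residue classes a single well-chosen small value of $y$ (typically $y=1$ or $y=2$, occasionally $y=3$) will suffice, exactly as in the $a=1$ case where $y=1$ handled almost everything and $y=2$ mopped up a congruence class. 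The genuinely delicate part is the analogue of the $b=6$ subcase in Theorem~\ref{thm:a=1,c=1}: for certain $b$ there will be sporadic small values of $n$ (and possibly one or two infinite congruence families) where the naive choice of $y$ lands back inside $4^k(16l+10)$, and these require either a second choice of $y$ or direct verification. I would organize the argument as a short case analysis on $b$, and within each $b$ a case analysis on the $2$-adic valuation and the residue of the odd part of $n$.

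I expect the main obstacle to be the bookkeeping in the worst cases—likely $b=8$ (since $8y^2$ interacts badly with the powers of $2$ in $4^k(16l+10)$) and perhaps $b=10$ itself (where $n-by^2$ can reproduce the shape $16l'+10$). For $b=8$, note $8 \equiv 8 \pmod{16}$, so $n - 8y^2$ for odd $y$ shifts the residue mod $16$ from $10$ to $2$, which is fine, but one must track what happens at higher powers of $4$; I anticipate needing to split on whether $4 \mid n$ and use $y$ even versus odd accordingly. The cleanest uniform device would be: given $n = 4^k m$ with $m \equiv 10 \pmod{16}$, pick $y$ so that $n - by^2$ has a strictly smaller power-of-$4$ divisor or an odd part avoiding $\equiv 10 \pmod{16}$, then conclude by Proposition~\ref{prop:a=2,c=1}; only finitely many small $n$ per value of $b$ will escape this and must be exhibited explicitly (e.g.\ $n=10$ itself as $2\cdot0^2 + \ldots$ fails so one writes $10 = b\cdot 1^2 + (\text{something})$, etc.). Once the finitely many exceptional $n$ are tabulated and the generic families dispatched by the modular reduction, the theorem follows.
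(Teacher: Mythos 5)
Your strategy coincides with the paper's: both directions rest on Proposition~\ref{prop:a=2,c=1}, the bound $b\le 10$ comes from the fact that representing $n=10$ forces $y\neq 0$, and the ``if'' direction is carried by choosing $y$ so that $n-by^2$ escapes the forbidden set $4^k(16l+10)$. The one substantive issue is that your plan stops short of the verification and mislocates the difficulty: the ``bookkeeping at higher powers of $4$'' that you flag as the main obstacle (especially for $b=8$) is not there at all. Doubling all four variables multiplies the value of $f^{2,b}_1$ by $4$, so it suffices to represent the integers $n=16l+10$, i.e.\ the case $k=0$; this is the (implicit) reduction behind the paper's remark that one has ``only to prove that $f^{2,b}_1$ represents $n=16l+10$.'' Once that is in place the case analysis collapses to a few lines: the set $4^k(16l+10)$ meets only the residues $10$ ($k=0$), $8$ ($k=1$) and $0$ ($k\ge 2$) modulo $16$, so for $b=3,\dots,9$ the choice $y=1$ gives $n-b\equiv 10-b\in\{1,\dots,7\}\pmod{16}$, which is automatically representable as $2x^2+(z^2+zw+w^2)$; for $b=2$ take $y=2$ to get $16l+2$; and for $b=10$ take $y=1$ when $n=10,26$ (leaving $0$ and $16$, both representable) and $y=2$ when $l\ge 2$ (leaving $16(l-2)+2$). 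No sporadic exceptional $n$ or extra congruence families survive, so adding the scaling observation and this short table turns your outline into essentially the paper's proof.
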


\begin{proof}
The ``only if'' direction follows from Proposition \ref{prop:a=2,c=1}. 
In order to establish the ``if'' direction, 
we have only to prove that $f^{2,b}_1$ represents $n=16l+10, \,\,(l\in\mathbb{N}_0).$ 
\par
When $b\neq 2,10,$ taking $y=1,$ 
we have 
$$
n-by^2=16l+10-b\cdot1^2\not\equiv 0,8, 10  \,\,\mathrm{mod}\,16,
$$
which can be written as $2x^2+(z^2+zw+w^2)$ with $x,z\in\mathbb{Z}.$ 
\par
When $b=2,$ 
taking $y=2,$ we obtain 
$$
n-2\cdot 2^2=16l+10-8=16l+2, 
$$
which can be represented as $2x^2+(z^2+zw+w^2)$ with $x,z\in\mathbb{Z}.$ 
\par
We last suppose that $b=10.$ 
If $n=10, 26,$ 
taking $y=1,$ 
we obtain 
$$
n-10\cdot 1^2=0, 16, 
$$
which can be written as $2x^2+(z^2+zw+w^2)$ with $x,z\in\mathbb{Z}.$ 
\par
If $l\ge 2,$ 
taking $y=2,$ 
we have 
$$
n-10\cdot 2^2=16l+10-40=16(l-2)+2,
$$
which can be represented as $2x^2+(z^2+zw+w^2)$ with $x,z\in\mathbb{Z}.$
\end{proof}

\subsection{Summary}
By Theorems \ref{thm:a=1,c=1}, \ref{thm:a=2,c=1} and their proofs, 
we obtain 
\begin{theorem}
\label{thm:c=1-summary}
{\it
Let $a,b$ be positive integers with $a\le b.$ 
\begin{enumerate}\itemsep=0pt
\item[$(1)$] 
$f^{a,b}_1$ can represent any nonnegative integers if and only if $(a,b)$ are given by 
\begin{equation*}
(a,b)=
\begin{cases}
(1,b)   &(b=1,2,3,4,5,6), \\
(2,b)  &(b=2,3,4,5,6,7,8,9,10).
\end{cases}
\end{equation*}
\item[$(2)$]
If $f^{a,b}_1$ represents $n=1,2, 6,10,$ 
it can represent all the nonnegative integers. 
\end{enumerate}
}
\end{theorem}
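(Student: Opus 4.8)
The plan is to deduce both parts directly from Theorems \ref{thm:a=1,c=1} and \ref{thm:a=2,c=1} together with Lemma \ref{lem:c=1}. For part (1), the combinatorial content is already packaged: by Lemma \ref{lem:c=1}, any $f^{a,b}_1$ that is universal must have $a=1$ or $a=2$ (since $n=2$ forces this). If $a=1$, Theorem \ref{thm:a=1,c=1} says universality is equivalent to $b\in\{1,2,3,4,5,6\}$; if $a=2$, Theorem \ref{thm:a=2,c=1} says universality is equivalent to $b\in\{2,3,4,5,6,7,8,9,10\}$. Assembling these two lists, and noting the constraint $a\le b$ is automatically satisfied in every listed case, gives exactly the table in part (1). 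This step is essentially bookkeeping.

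For part (2), I would argue by the contrapositive: suppose $(a,b)$ is \emph{not} one of the pairs listed in part (1); I must produce one of $n=1,2,6,10$ that $f^{a,b}_1$ fails to represent. First, if $a\ge 3$, then $f^{a,b}_1$ cannot represent $n=2$, since $a x^2+b y^2$ contributes $0$ or $\ge 3$, and $2$ is not of the form $z^2+zw+w^2$ (the values of the norm form $z^2+zw+w^2$ near $0$ are $0,1,3,4,7,\dots$), and $1$ is likewise not represented by it; so $n=2$ is missed. If $a\in\{1,2\}$ but $b$ lies outside the respective admissible range, I would invoke the ``only if'' half of Theorem \ref{thm:a=1,c=1} or \ref{thm:a=2,c=1}: there the failure was exhibited through a specific residue obstruction ($n=9^k(9l+6)$ for $a=1$, via $r_{1,1,3}$; $n=4^k(16l+10)$ for $a=2$, via Proposition \ref{prop:a=2,c=1}). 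The key point to verify is that the \emph{smallest} excluded value can be forced to lie among $\{1,2,6,10\}$. Concretely: for $a=1$, $b\ge 7$, the form $x^2+by^2+(z^2+zw+w^2)$ must represent $6$; but with $y=0$ this is $x^2+(z^2+zw+w^2)$, and $6=9^0(9\cdot 0+6)$ is excluded by Proposition \ref{prop:a=1,c=1}, while $y\neq 0$ gives $by^2\ge 7>6$ — so $6$ is missed. For $a=2$, $b\ge 11$ (and $b=1$ is impossible since $a\le b$), the form must represent $10$; with $y=0$ this is $2x^2+(z^2+zw+w^2)$, and $10=4^0(16\cdot 0+10)$ is excluded by Proposition \ref{prop:a=2,c=1}, while $y\neq 0$ gives $by^2\ge 11>10$ — so $10$ is missed.

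Thus in every case outside the table, one of $n=1,2,6,10$ is not represented, which is the contrapositive of part (2). The main obstacle — really the only place where anything needs checking rather than citing — is confirming that for the boundary excluded values ($6$ when $a=1$, $10$ when $a=2$) no contribution from the $by^2$ term with $y\neq 0$ can sneak in; this is immediate from the size comparisons $b\cdot 1^2 > 6$ and $b\cdot 1^2 > 10$ under the relevant hypotheses $b\ge 7$ and $b\ge 11$, so the verification is short. One should also note $n=1$ is always represented (take $x=0,y=0$ and one of $z,w$ equal to $\pm1$) and $n=2$ fails precisely when $a,b\ge 3$, covering the remaining small case; with these observations the argument is complete.
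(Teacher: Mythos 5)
Your proposal is correct and follows essentially the same route as the paper, whose ``proof'' of this summary theorem is just a citation of Theorems \ref{thm:a=1,c=1} and \ref{thm:a=2,c=1} together with their proofs; you have merely made explicit the contrapositive case analysis for part (2) (miss $n=2$ when $a\ge 3$, miss $n=6$ when $a=1,\,b\ge 7$, miss $n=10$ when $a=2,\,b\ge 11$) that the paper leaves implicit, and that analysis checks out. One minor slip: in the $a\ge 3$ case the clause ``$1$ is likewise not represented by it'' is false if ``it'' denotes the norm form $z^2+zw+w^2$ (which represents $1$), but the clause is redundant anyway, since $ax^2+by^2$ being either $0$ or at least $3$, together with $2$ not being a value of $z^2+zw+w^2$, already excludes $n=2$.
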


Furthermore, we obtain the following theorem:

\begin{theorem}
{\it
For fixed positive integers, $a,c,$ set
$$
f^a_c(x,y,z)=ax^2+c(y^2+yz+z^2) \,\,\text{with} \,\,x,y,z\in\mathbb{Z}.
$$
There exists no positive integers, $a,c$ such that $f^a_c$ can represent every natural number. 
}
\end{theorem}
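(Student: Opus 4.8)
The plan is to show that a ternary form $f^a_c(x,y,z) = ax^2 + c(y^2+yz+z^2)$ can never be universal, by exploiting the well-known fact that no positive-definite \emph{ternary} quadratic form represents all nonnegative integers. The cleanest route is a congruence obstruction: I would locate, for each pair $(a,c)$, an arithmetic progression of integers that $f^a_c$ cannot reach. The form $y^2+yz+z^2$ only represents integers whose prime factorization contains each prime $p \equiv 2 \pmod 3$ to an even power (equivalently, the norm form of $\Z[\omega]$); in particular it never represents any integer $\equiv 2 \pmod 3$, and $2$ itself is not represented. This already handles many cases, but to get a single uniform argument I would instead invoke the general principle rather than chase cases.

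First I would reduce to the ternary universality question: if $f^a_c$ represented every $n \in \mathbb{N}_0$, then $ax^2 + cy^2 + cz^2 + cyz$ would be a positive-definite ternary integral quadratic form representing all of $\mathbb{N}_0$. Next I would cite (or prove in one line via a $2$-adic or $3$-adic density argument) that no positive-definite ternary quadratic form is universal — this is classical, going back to the fact that the genus of such a form always omits infinitely many integers, and indeed every ternary form fails to represent some integer in a fixed square class because of the local obstruction at some prime. Concretely, for $ax^2 + c(y^2+yz+z^2)$ one can make this explicit: since $y^2+yz+z^2 \equiv 0$ or $1 \pmod 3$, the values of $f^a_c$ modulo $3$ lie in $\{ax^2 \bmod 3\} + \{0,1\}$; analyzing the three cases $a \equiv 0,1,2 \pmod 3$ together with the analogous restriction coming from powers of $3$ dividing $y^2+yz+z^2$ (the analogue of the "$9^k(9l+6)$" obstruction seen in Proposition~\ref{prop:a=1,c=1}) pins down an omitted class.

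The main obstacle is organizing the case analysis so that it is genuinely uniform in $(a,c)$ rather than an infinite family of ad hoc congruences. I expect the tidiest formulation to be: by Proposition~\ref{prop:a=1,c=1} and Proposition~\ref{prop:a=2,c=1}, even the "best" cases $f^1_1$ and $f^2_1$ already miss the progressions $9^k(9l+6)$ and $4^k(16l+10)$ respectively, and scaling $c \mapsto mc$ or enlarging $a$ can only shrink the represented set relative to the quaternary forms; but since the quaternary form $f^{a,b}_c$ needs a genuine fourth variable to become universal (as the explicit list in Theorem~\ref{thm:main} shows every universal case has $\min\{a,b\} \le 2$ and uses both $x$ and $y$), no ternary specialization survives. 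I would therefore finish by exhibiting, for arbitrary $a$, an integer not of the form $ax^2 + c(y^2+yz+z^2)$: choosing $n$ in a suitable class mod $a$ forces $x$ into finitely many residues, and then a further congruence modulo a prime $p \equiv 2 \pmod 3$ with $p \nmid ac$ rules out $c(y^2+yz+z^2)$ covering the remainder — this two-step sieve is the crux, and making the choice of $p$ effective given $a$ and $c$ is where the real work lies.
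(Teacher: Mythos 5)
Your high-level route --- observe that $f^a_c$ is a positive-definite \emph{ternary} integral form and invoke the classical fact that no such form is universal --- is valid and genuinely different from the paper's argument. The paper does something far more elementary: testing $n=1$ forces $a=1$ or $c=1$ (since $ax^2\in\{0,a,4a,\dots\}$ and $c(y^2+yz+z^2)\in\{0,c,3c,\dots\}$), and testing $n=2$ (using that $2$ is not of the form $y^2+yz+z^2$) then cuts the possibilities down to $(a,c)\in\{(1,1),(1,2),(2,1)\}$; each of these three forms was already shown earlier in the paper to miss an explicit progression ($9^k(9l+6)$, $4^k(8l+5)$ and $4^k(16l+10)$, respectively), which finishes the proof. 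So the ``main obstacle'' you identify --- making the argument uniform in $(a,c)$ --- dissolves entirely once the two smallest test values are used; no infinite case analysis and no effective choice of a prime $p$ are required. Your citation buys generality (it disposes of all positive ternary forms at once), while the paper's method stays self-contained and elementary.

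That said, the explicit argument you sketch as ``the crux'' would fail as written. A congruence modulo a single prime $p\equiv 2\pmod 3$ cannot rule out values of $y^2+yz+z^2$: for $p\neq 3$ this binary form is nondegenerate over $\mathbb{F}_p$ and therefore represents \emph{every} residue class mod $p$. The genuine obstruction at such a prime is that the $p$-adic valuation of any represented integer must be even --- a condition modulo $p^2$ (indeed a valuation condition), not modulo $p$ --- and arranging it simultaneously for $n-ax^2$ over all admissible $x$ is exactly the step you have not supplied. If you do not wish to lean on the classical non-universality of ternary forms, replace the sieve by the reduction via $n=1$ and $n=2$ described above.
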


\begin{proof}
Suppose that there exist such positive integers, $a,c.$ 
Taking $n=1,$ we have $a=1$ or $c=1.$ 
\par
Suppose that $a=1.$ The choice $n=2$ implies that $c=1,2.$ 
On the other hand, 
if $(a,c)=(1,1), (1,2),$ by Proposition \ref{prop:a=1,c=1} and Lemma \ref{lem-1,2,3}, 
we see that there exist positive integers which cannot be expressed by $f^1_1$ or $f^1_2,$ 
which is contradiction. 
\par
Suppose that $c=1.$ The choice $n=2$ implies that $a=1,2.$ 
If $(a,c)=(1,1),(2,1),$ by Propositions \ref{prop:a=1,c=1} and \ref{prop:a=2,c=1}, 
we see that there exist positive integers which cannot be expressed by $f^1_1$ or $f^1_2,$ 
which is contradiction. 
\end{proof}

\subsubsection*{Remark}
In \cite[pp.104]{Dickson-2}, 
Dickson proved that 
there exist no positive integers, $a,b,c,$ 
such that $ax^2+by^2+cz^2, \,(x,y,z\in\mathbb{Z})$ can represent all positive integers.

\section{The case where $c=2$}

Noting that $a=1$ 
if $f^{a,b}_2$ can represent any nonnegative integers, by Lemma \ref{lem-1,2,3} (2), 
we obtain the following theorem:

\begin{theorem}
\label{thm:a=1,c=2}
{\it
Suppose that $1\le a \le b.$ 
Then, 
$f^{a,b}_2$ 
can represent any nonnegative integers $n\in\mathbb{N}_0$ 
if and only if $a=1$ and $b=1,2,3,4,5.$
}
\end{theorem}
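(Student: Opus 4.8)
The plan is to deduce everything from Lemma \ref{lem-1,2,3}(2) together with the scaling identity $f^{1,b}_2(2x,2y,2z,2w)=4f^{1,b}_2(x,y,z,w)$. Recall that Lemma \ref{lem-1,2,3}(2) says the $y=0$ specialization $f^{1,b}_2(x,0,z,w)=x^2+2(z^2+zw+w^2)$ represents exactly those nonnegative integers that are \emph{not} of the form $4^k(8l+5)$; equivalently, a nonnegative integer $m$ is missed by this ternary form if and only if its $4$-free part $m/4^{\lfloor v_2(m)/2\rfloor}$ is $\equiv 5\pmod 8$.

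For the necessity I would use $n=1$ and $n=5$. A representation of $1$ forces $2(z^2+zw+w^2)=0$ and $ax^2+by^2=1$ with $1\le a\le b$, which gives $a=1$ (whether or not $y=0$). Assuming $a=1$, since $5=4^0(8\cdot 0+5)$ is missed by the ternary form, any representation of $5$ by $f^{1,b}_2$ must use $y\ne 0$, so $by^2\le 5$ with $y^2\ge 1$, i.e.\ $b\le 5$. Hence $a=1$ and $b\in\{1,2,3,4,5\}$.

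For the sufficiency, fix $b\in\{1,\dots,5\}$. The key reduction is that it suffices to show $f^{1,b}_2$ represents every $n\equiv 5\pmod 8$: any $N\in\mathbb{N}_0$ not of the form $4^k(8l+5)$ is already represented by $f^{1,b}_2(x,0,z,w)$, while if $N=4^k(8l+5)$ then $N=4^kM$ with $M=8l+5\equiv 5\pmod 8$, and a representation of $M$ yields one of $N$ via the scaling identity. So let $n\equiv 5\pmod 8$; I would pick a small $y$ so that $m:=n-by^2\ge 0$ has $4$-free part $\not\equiv 5\pmod 8$, and then apply Lemma \ref{lem-1,2,3}(2) to $m$. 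For $b=1$ take $y=2$ (so $m\equiv 1\pmod 8$); for $b=2$ take $y=1$ (so $m\equiv 3\pmod 8$); for $b=3$ take $y=1$ (so $m\equiv 2\pmod 8$, whence $v_2(m)=1$); for $b=4$ take $y=1$ (so $m\equiv 1\pmod 8$). In each case $m\ge n-16>0$ and $m$ is visibly not of the excluded form.

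The one remaining case, and the place where a little care is needed, is $b=5$. Here $y=1$ gives $m=n-5=8l$, whose $4$-free part is harmless unless $v_2(l)$ is odd and the odd part of $l$ is $\equiv 5\pmod 8$; when this happens $l$ is in particular even, so $n\ge 85>45$, and I would instead take $y=3$, giving $m=n-45=8(l-5)$ with $l-5$ odd, hence $v_2(m)=3$, so again the $4$-free part of $m$ is $\not\equiv 5\pmod 8$. Thus in every case $m$ is represented by the ternary form and therefore $n$ by $f^{1,b}_2$. I do not anticipate any obstacle beyond organizing this $b=5$ sub-case cleanly.
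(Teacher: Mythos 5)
Your proof is correct and follows essentially the same route as the paper: necessity from $n=1,5$ via Lemma \ref{lem-1,2,3}(2), and sufficiency by reducing (through scaling) to $n\equiv 5 \pmod 8$ and subtracting a small $by^2$ to land outside the excluded set $4^k(8l+5)$; the only real divergence is that the paper settles $b=5$ more simply by taking $y=2$ for $l\ge 2$, giving $n-20=8(l-2)+1\equiv 1\pmod 8$, instead of your $y=1$/$y=3$ case split. The one blemish is the throwaway bound ``$m\ge n-16>0$'', which is false for $n=5$; what you actually need and have is $m=n-by^2\ge n-4\ge 1$ since the subtracted quantity is at most $4$ in each case $b\le 4$.
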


\begin{proof}
The ``only if'' direction follows from Lemma \ref{lem-1,2,3} (2). 
In order to establish the ``if'' direction, 
we have only to prove that  $f^{1,b}_2$ represents $n=8l+5, \,\,(l\in\mathbb{N}_0).$ 
\par
We first treat the case where $b=1.$ 
Taking $y=2,$ 
we obtain 
$$
n-1\cdot 2^2=8l+5-4=8l+1, 
$$
which can be represented as $x^2+2(z^2+zw+w^2)$ with $x,z\in\mathbb{Z}.$
\par
When $b=2,3,4,$ taking $y=1,$ 
we have 
$$
n-by^2=8l+5-b\cdot1^2\equiv 3,2, 1 \,\,\mathrm{mod}\,8,
$$
which can be written as $x^2+2(z^2+zw+w^2)$ with $x,z\in\mathbb{Z}.$ 
\par
We last suppose that $b=5.$ 
If $l=0,1,$  
taking $y=1,$ 
we obtain 
$$
n-5\cdot 1^2=8l+5-5=0,8, 
$$
which can be represented as $x^2+2(z^2+zw+w^2)$ with $x,z\in\mathbb{Z}.$ 
\par
If $l\ge 2,$ 
taking $y=2,$ 
we have 
$$
n-5\cdot 2^2=8l+5-20=8(l-2)+1,
$$
which can be written as $x^2+2(z^2+zw+w^2)$ with $x,z\in\mathbb{Z}.$
\end{proof}

By the proof of Theorem \ref{thm:a=1,c=2}, 
we obtain 
\begin{theorem}
\label{thm:a=1,c=2-(2)}
{\it
If $f^{a,b}_2$ represents $n=1,5,$ 
it can represent all the nonnegative integers. 
}
\end{theorem}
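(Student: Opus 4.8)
The plan is to deduce Theorem \ref{thm:a=1,c=2-(2)} directly from Theorem \ref{thm:a=1,c=2} together with its proof. First I would note that if $f^{a,b}_2$ represents $n=1$, then since the form is $ax^2+by^2+2(z^2+zw+w^2)$ and the minimal positive value of $2(z^2+zw+w^2)$ is $2$, representing $1$ forces $ax^2+by^2=1$ for some $x,y$, hence $a=1$ (using $1\le a\le b$). Next, with $a=1$, representing $n=5$ must use the $by^2$ term: the possible values of $x^2+2(z^2+zw+w^2)$ are $0,1,2,3,4,6,8,\dots$ (note $5$ is not among them, which is precisely Lemma \ref{lem-1,2,3}(2) with $k=l=0$), so we need $by^2\in\{5,3,2,1\}$ for suitable $y$, giving $b\in\{1,2,3,4,5\}$.

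Once $a=1$ and $b\in\{1,2,3,4,5\}$ are pinned down, I would invoke Theorem \ref{thm:a=1,c=2}, which asserts that for exactly these pairs $f^{1,b}_2$ represents every nonnegative integer. Therefore $f^{a,b}_2$ representing $\{1,5\}$ already forces it into the finite list of forms known to be universal, and the conclusion follows.

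The only mild subtlety — and the step I would be most careful about — is the extraction of $b$ from the representation of $n=5$: one must check that $5$ genuinely fails to be represented by $x^2+2(z^2+zw+w^2)$ alone, so that the $y$-variable is forced to contribute, and then enumerate which values $by^2$ can take. This is immediate from Lemma \ref{lem-1,2,3}(2), since $5=4^0(8\cdot 0+5)$ is of the excluded form. After that the argument is a short bookkeeping check, so I would keep the write-up brief and lean entirely on the two cited results.

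\begin{proof}
Assume $f^{a,b}_2$ represents $n=1$ and $n=5$. Since $2(z^2+zw+w^2)\ge 2$ whenever $(z,w)\neq(0,0)$, a representation of $1$ must have $z=w=0$, hence $ax^2+by^2=1$ with $x,y\in\mathbb{Z}$; as $1\le a\le b$, this forces $a=1$. With $a=1$, Lemma \ref{lem-1,2,3}~(2) shows that $5$ is \emph{not} of the form $x^2+2(z^2+zw+w^2)$ (indeed $5=4^0(8\cdot 0+5)$), so any representation $5=x^2+by^2+2(z^2+zw+w^2)$ must have $y\neq 0$; thus $by^2\le 5$ and $5-by^2\in\{0,1,2,3,4,6,8,\dots\}$ is a value of $x^2+2(z^2+zw+w^2)$. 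Running over $y^2=1$ this gives $b\in\{1,2,3,4,5\}$ (the value $b=4$ via $5-4=1$, etc.), and $|y|\ge 2$ gives nothing new. Hence $(a,b)$ lies in the list of Theorem \ref{thm:a=1,c=2}, and consequently $f^{a,b}_2$ represents all nonnegative integers.
\end{proof}
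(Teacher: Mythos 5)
Your proof is correct and follows essentially the same route as the paper, which simply remarks that the theorem follows from (the proof of) Theorem \ref{thm:a=1,c=2}: representing $1$ forces $a=1$, representing $5$ (which by Lemma \ref{lem-1,2,3}~(2) is not of the form $x^2+2(z^2+zw+w^2)$) forces $b\in\{1,2,3,4,5\}$, and these are exactly the universal cases. Your write-up just makes explicit the bookkeeping the paper leaves implicit.
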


\section{The case where $c=3$}

We use the result of Dickson \cite[pp. 112-113]{Dickson-2}:

\begin{lemma}
\label{lem-1,3,9}
{\it
$n\in\mathbb{N}_0$ can be written as $x^2+3y^2+9z^2$ with $x,y,z\in\mathbb{Z}$ 
if and only if $n\neq 3l+2, \, 9^k(9l+6), \,\, (k,l\in\mathbb{N}_0).$ 
}
\end{lemma}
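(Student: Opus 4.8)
The plan is to reduce everything to Lemma~\ref{lem-1,1,3}, the already-available characterization of the integers represented by $x^2+y^2+3z^2$, exploiting the single elementary observation that $9z^2=(3z)^2$. Write $R$ for the set of integers represented by $x^2+3y^2+9z^2$ and $R'$ for those represented by $x^2+y^2+3z^2$, so that Lemma~\ref{lem-1,1,3} reads $R'=\{\,n : n\neq 9^k(9l+6)\,\}$. The whole argument then reduces to transferring representations between $R$ and $R'$.

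For necessity I would establish the two exclusions separately. First, reducing modulo $3$ gives $x^2+3y^2+9z^2\equiv x^2\equiv 0,1\pmod 3$, so no integer $\equiv 2\pmod 3$ can lie in $R$; this forbids $n=3l+2$. Second, the rewriting $x^2+3y^2+9z^2=x^2+(3z)^2+3y^2$ shows $R\subseteq R'$, so Lemma~\ref{lem-1,1,3} immediately forbids $n=9^k(9l+6)$ from lying in $R$. These two observations give the ``only if'' direction with essentially no computation.

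For sufficiency, suppose $n\not\equiv 2\pmod 3$ and $n\neq 9^k(9l+6)$. The latter hypothesis lets me invoke Lemma~\ref{lem-1,1,3} to obtain a representation $n=a^2+b^2+3c^2$, and the crux is a short congruence analysis forcing one of $a,b$ to be divisible by $3$. If $3\mid n$, then $a^2+b^2\equiv 0\pmod 3$ forces $3\mid a$ and $3\mid b$; if $3\nmid n$, then $n\equiv 1\pmod 3$ (using $n\not\equiv 2$), so $a^2+b^2\equiv 1\pmod 3$ forces exactly one of $a,b$ to be divisible by $3$. In either case, after relabelling, write $b=3b'$; then $n=a^2+3c^2+9b'^2$ exhibits $n\in R$, completing the ``if'' direction.

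The main obstacle---really the only place any genuine content enters---is this sufficiency case analysis: one must verify that the hypothesis $n\not\equiv 2\pmod 3$ is precisely what guarantees a representation $a^2+b^2+3c^2$ with $3\mid a$ or $3\mid b$, so that the offending square can be folded into the $9z^2$ slot. The residue $n\equiv 2\pmod 3$ is exactly the case where $a^2+b^2\equiv 2$ forces both variables prime to $3$ and the absorption fails, which dovetails with its appearance among the exceptions. I would conclude by noting that the two excluded families are disjoint (one is $\equiv 2\pmod 3$, the other $\equiv 0\pmod 3$) and that the constructions above cover every remaining $n$, so the stated characterization is exact.
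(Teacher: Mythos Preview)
Your argument is correct. The paper, however, does not supply its own proof of Lemma~\ref{lem-1,3,9}: it simply quotes the result from Dickson \cite[pp.~112--113]{Dickson-2}, exactly as it does for Lemma~\ref{lem-1,1,3}. So there is no ``paper's proof'' to compare against.

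That said, your reduction is a genuine improvement over a bare citation. By observing that $x^2+3y^2+9z^2=x^2+(3z)^2+3y^2$ you get $R\subseteq R'$ for free, and the short residue analysis modulo~$3$---that $a^2+b^2\equiv 0$ or $1\pmod 3$ forces at least one of $a,b$ to be divisible by~$3$ precisely when $n\not\equiv 2\pmod 3$---is exactly what is needed to pull a representation back from $R'$ to $R$. This deduces Lemma~\ref{lem-1,3,9} directly from Lemma~\ref{lem-1,1,3}, so only one Dickson citation is really required rather than two. The closing remark on disjointness of the two excluded families is correct but not needed for the logic of the proof.
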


By Lemma \ref{lem-1,3,9}, 
we have the following proposition:

\begin{proposition}
\label{prop:a=1,c=3}
{\it
$n\in\mathbb{N}_0$ can be written as $x^2+3(y^2+yz+z^2)$ with $x,y,z\in\mathbb{Z}$ 
if and only if $n\neq 3l+2, \, 9^k(9l+6), \,\,(k,l\in\mathbb{N}_0).$
}
\end{proposition}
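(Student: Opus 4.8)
The plan is to imitate the proofs of Propositions \ref{prop:a=1,c=1} and \ref{prop:a=2,c=1}, now comparing the form $x^2+3(y^2+yz+z^2)$ with the ternary form $x^2+3y^2+9z^2$ controlled by Lemma \ref{lem-1,3,9}. The generating function of $x^2+3(y^2+yz+z^2)$ is $\varphi(q)a(q^3)$, while that of $x^2+3y^2+9z^2$ is $\varphi(q)\varphi(q^3)\varphi(q^9)$; so the identities (\ref{eqn:tri(1,3)}) and (\ref{eqn:tri(1,3)-(2)}) should be used after the substitution $q\mapsto q^3$.

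First I would replace $q$ by $q^3$ in (\ref{eqn:tri(1,3)}) and (\ref{eqn:tri(1,3)-(2)}) and multiply both resulting identities by $\varphi(q)$, obtaining
\begin{align*}
\varphi(q)a(q^3)&=\varphi(q)a(q^{12})+6q^3\varphi(q)\psi(q^6)\psi(q^{18}),\\
\varphi(q)\varphi(q^3)\varphi(q^9)&=\varphi(q)a(q^{12})+2q^3\varphi(q)\psi(q^6)\psi(q^{18}).
\end{align*}
Expanding each factor as a power series in $q$ and comparing coefficients of $q^n$ turns these into
\begin{align*}
A^1_3(n)&=A^1_{12}(n)+6\,m_{1\text{-}6,18}(n-3),\\
r_{1,3,9}(n)&=A^1_{12}(n)+2\,m_{1\text{-}6,18}(n-3),
\end{align*}
where $r_{1,3,9}$, $A^1_{12}$ and $m_{1\text{-}6,18}$ are as defined in Section 2, with the convention $m_{1\text{-}6,18}(m)=0$ for $m<0$.

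Since every quantity on the right-hand sides is nonnegative and occurs with a positive coefficient, $A^1_3(n)>0$ holds if and only if $A^1_{12}(n)>0$ or $m_{1\text{-}6,18}(n-3)>0$, and the same disjunction is equivalent to $r_{1,3,9}(n)>0$. Thus $A^1_3(n)>0\Longleftrightarrow r_{1,3,9}(n)>0$, and Lemma \ref{lem-1,3,9} identifies the latter with $n\neq 3l+2,\,9^k(9l+6)$ for $k,l\in\mathbb{N}_0$, which proves the proposition.

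The only delicate point is the bookkeeping of exponents in the two power-series identities, in particular the shift by $3$ produced by the factor $q^3$, and the legitimacy of the substitution $q\mapsto q^3$ (which is harmless, since both sides of (\ref{eqn:tri(1,3)}) and (\ref{eqn:tri(1,3)-(2)}) are genuine power series in $q$ convergent for $|q|<1$). Apart from that, the argument is a purely formal coefficient comparison, exactly parallel to the $c=1$ cases, so I do not anticipate any real obstacle.
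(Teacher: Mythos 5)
Your argument is correct and coincides with the paper's own proof: the same substitution $q\mapsto q^3$ in (\ref{eqn:tri(1,3)}) and (\ref{eqn:tri(1,3)-(2)}), the same multiplication by $\varphi(q)$, the same coefficient comparison yielding the equivalence $A^1_3(n)>0\Longleftrightarrow r_{1,3,9}(n)>0$, and the same appeal to Lemma \ref{lem-1,3,9}. No issues.
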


\begin{proof}
Replacing $q$ by $q^3$ in equations (\ref{eqn:tri(1,3)}) and (\ref{eqn:tri(1,3)-(2)}), 
we have 
\begin{align*}
a(q^3)=&a(q^{12})+6q^3\psi (q^6)\psi (q^{18}),   \\
\varphi(q^3)\varphi(q^9)=&a(q^{12})+2q^3\psi (q^6)\psi (q^{18}). 
\end{align*}
Multiplying both sides of these equations by $\varphi(q),$ 
we have 
\begin{align}
\varphi(q)a(q^3)=&\varphi(q)a(q^{12})+6q^3\varphi(q)\psi (q^6)\psi (q^{18}),  \\
\varphi(q)\varphi(q^3)\varphi(q^9)=&\varphi(q)a(q^{12})+2q^3\varphi(q)\psi (q^6)\psi (q^{18}). 
\end{align}
which implies that 
\begin{align*}
\sum_{n=0}^{\infty} A^1_3(n) q^n=&\sum_{n=0}^{\infty} A^1_{12}(n) q^n +6q^3 \sum_{N=0}^{\infty} m_{1\text{-}6,18}(N)q^N,  \\
\sum_{n=0}^{\infty} r_{1,3,9}(n) q^n=&\sum_{n=0}^{\infty} A^1_{12}(n) q^n +2q^3 \sum_{N=0}^{\infty} m_{1\text{-}6,18}(N)q^N.                                         
\end{align*}
Therefore, 
it follows that
\begin{align*}
n\neq 3l+2, \, 9^k(9l+6), \,\,(k,l\in\mathbb{N}_0) &\Longleftrightarrow r_{1,3,9}(n)>0  \\
                                                     &\Longleftrightarrow A^1_{12}(n)>0 \,\textrm{or} \, m_{1\text{-}6,18}(n-3)>0,  \\
                                                     &\Longleftrightarrow A^1_3(n)>0,
\end{align*}
which proves the proposition. 
\end{proof}

Noting that $a=1$ 
if $f^{a,b}_3$ can represent any nonnegative integers, 
we obtain the following theorem:

\begin{theorem}
\label{thm:a=1,c=3}
{\it
$f^{a,b}_3$ 
can represent any nonnegative integers $n\in\mathbb{N}_0$ 
if and only if $a=1$ and $b=2.$
}
\end{theorem}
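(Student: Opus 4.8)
The plan is to follow the template established by Theorems \ref{thm:a=1,c=1} and \ref{thm:a=2,c=1}. First I would observe that if $f^{a,b}_3$ represents $n=1$, then since $3(y^2+yz+z^2)\ge 0$ and takes values $0,3,9,12,\dots$, we need $ax^2+by^2=1$ for suitable $x,y$, forcing $a=1$ (using $a\le b$). Next, taking $n=2$, the form $x^2+by^2+3(z^2+zw+w^2)$ must represent $2$; since the ternary part contributes $0$ or $\ge 3$, we need $x^2+by^2=2$, which forces $b\le 2$, i.e. $b\in\{1,2\}$. Then I would rule out $b=1$: by Proposition \ref{prop:a=1,c=3}, $f^{1,1}_3(x,y,z,w)=x^2+y^2+3(z^2+zw+w^2)$ represents exactly those $n$ with $n\neq 3l+2,\,9^k(9l+6)$ shifted by a square $y^2$; concretely, $f^{1,1}_3$ represents $n$ iff $n-y^2$ is represented by $x^2+3(z^2+zw+w^2)$ for some $y$, i.e. iff $n-y^2\not\equiv 2\pmod 3$ and is not of the form $9^k(9l+6)$ for some $y\in\mathbb{Z}$. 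The obstruction to check is that there is some $n$ for which every $n-y^2$ fails; a small search (e.g. $n$ around values where all of $n,n-1,n-4,n-9,\dots$ land in the bad set) should exhibit such an $n$, completing the ``only if'' direction together with the case $a\ge 2$ being excluded.

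For the ``if'' direction with $(a,b)=(1,2)$, I would show $f^{1,2}_3(x,y,z,w)=x^2+2y^2+3(z^2+zw+w^2)$ represents every $n\in\mathbb{N}_0$. By Proposition \ref{prop:a=1,c=3}, the subform $x^2+3(z^2+zw+w^2)$ represents all $n$ except those $\equiv 2\pmod 3$ or of the shape $9^k(9l+6)$. So it suffices to represent these exceptional $n$ by choosing an appropriate value of $y$ and reducing to the non-exceptional case, exactly as in the proofs of Theorems \ref{thm:a=1,c=1} and \ref{thm:a=1,c=2}. The candidates are $y=1$, contributing $2y^2=2$, and $y=2$, contributing $2y^2=8$. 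For $n\equiv 2\pmod 3$ with $n$ not a bad power-of-$9$ value, subtracting $2\cdot 1^2=2$ gives $n-2\equiv 0\pmod 3$; one must then also check $n-2$ is not of the form $9^k(9l+6)$, and if it is, use $y=2$ to get $n-8\equiv 0\pmod 3$ instead. A finite case analysis modulo a suitable power of $9$, handling the small sporadic values $n$ directly (as done for $b=6$ in Theorem \ref{thm:a=1,c=1} and $b=5$ in Theorem \ref{thm:a=1,c=2}), should close this.

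The main obstacle I anticipate is the bookkeeping in the ``if'' direction: the bad set $\{3l+2\}\cup\{9^k(9l+6)\}$ is not a single congruence class, so subtracting $2$ or $8$ does not automatically land outside it, and one has to track the $9$-adic valuation. The cleanest route is to argue: given $n$, if $n\equiv 0,1\pmod 3$ and $n\neq 9^k(9l+6)$ we are done with $y=0$; otherwise at least one of $n-2$, $n-8$ is $\equiv 0\pmod 3$, and among the two one can be arranged to avoid the exceptional power-of-$9$ form except for finitely many small $n$, which are then verified by hand (for instance $n=2=x^2+2y^2+3(\cdots)$ via $0+2+0$, $n=6$ via $4+2+0$, $n=6\cdot 9^k$ by scaling). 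I would also double-check the edge case where $n$ itself is small so that $n-8<0$, handling $n\in\{2,5,6\}$ and similar explicitly. Once the exceptional integers are covered, Proposition \ref{prop:a=1,c=3} supplies the rest, and the theorem follows.
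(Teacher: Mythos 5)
Your overall strategy is the same as the paper's: use Proposition \ref{prop:a=1,c=3} to identify the exceptional set $\{3l+2\}\cup\{9^k(9l+6)\}$ of the subform $x^2+3(z^2+zw+w^2)$, and then cover the exceptional $n$ by subtracting $2y^2$ for a suitable $y$. However, there is a genuine gap in your ``if'' direction. You restrict the candidates to $y=1$ (subtract $2$) and $y=2$ (subtract $8$) and claim that, apart from finitely many small $n$, one of the two lands outside the exceptional set. This fails for the entire residue class $n\equiv 8\pmod 9$: there $n-2\equiv 6\pmod 9$ is \emph{always} of the form $9m+6$, hence always exceptional, while $n-8\equiv 0\pmod 9$ is exceptional whenever $(n-8)/9$ is itself of the form $9^j(9m+6)$. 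That happens for the infinite family $n=9\cdot 9^{j}(9m+6)+8$ (e.g.\ $n=62$: $60=9\cdot 6+6$ and $54=9\cdot 6$ are both exceptional, and $y=3$ gives $44\equiv 2\pmod 3$, also exceptional). So the failures are not ``finitely many small $n$,'' and your proposed finite hand-check cannot close the argument. The paper resolves exactly this class by taking $y=4$: for $n\equiv 8\pmod 9$ one gets $n-32\equiv 3\pmod 9$, which is divisible by $3$ but not by $9$ and not $\equiv 6\pmod 9$, hence never exceptional; only $n=8,17,26$ then need separate treatment (via $y=2$, giving $0,9,18$).

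Two smaller points. First, in the ``only if'' direction you do not actually exhibit the integer witnessing that $b=1$ fails; the paper uses $n=6$ and argues directly that $6=x^2+y^2+3(z^2+zw+w^2)$ forces $z^2+zw+w^2=1$ and hence $x^2+y^2=3$, which is impossible (your search would also find $n=6$, since $6$, $5$, and $2$ all lie in the exceptional set, but as written this step is incomplete). Second, when covering $n$ in the class $9^k(9l+6)$, the choice $y=1$ does work uniformly because $n-2\equiv 1\pmod 3$ is never exceptional; this part of your sketch is fine once made explicit.
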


\begin{proof}
Let us first prove the ``only if'' direction. 
Proposition \ref{prop:a=1,c=3} implies that $b=1,2.$ 
Assume that $b=1$ and 
$$
n=6=x^2+y^2+3(z^2+zw+w^2) \,\,\textrm{with} \,\,x,y,z,w\in\mathbb{Z}, 
$$ 
which implies that $z^2+zw+w^2=1, \neq 0,2.$ 
It then follows that $x^2+y^2=3,$ which is impossible. 
\par
In order to establish the ``if'' direction, 
we have only to prove that $f^{1,b}_3$ can represent $n=3l+2, \,9l+6, \,\,(l\in\mathbb{N}_0).$ 
\par
Suppose that $n=3l+2.$ 
We first treat the case where $l\equiv 0 \,\mathrm{mod} \,3.$ 
It is obvious that $f^{1,2}_3$ can represent $n=2.$ 
When $l\ge 1$ and $l\equiv 0 \,\mathrm{mod} \,3,$ 
we obtain 
$$
n-2\cdot 2^2=3l+2-8=3(l-2), 
$$
which can be written as $x^2+3(z^2+zw+w^2)$ with $x,z\in\mathbb{Z}.$
\par
When $l\equiv 1 \,\mathrm{mod} \,3,$ taking $y=1,$ 
we have 
$$
n-2y^2=3l+2-2=3l, 
$$
which can be written as $x^2+3(z^2+zw+w^2)$ with $x,z\in\mathbb{Z}.$ 
\par
Assume that $l\equiv 2 \,\mathrm{mod} \,3.$ 
For $l=2, 5, 8,$ taking $y=2,$ 
we have 
$$
n-2\cdot 2^2=3l+2-2\cdot 2^2=0, 9, 9\cdot 2,
$$ 
which can be represented as $x^2+3(z^2+zw+w^2)$ with $x,z\in\mathbb{Z}.$
If $l\ge 11$ and $l=3L+2, \,\,(L\in\mathbb{N}_0),$ taking $y=4,$ 
we have 
$$
n-2\cdot 4^2=3l+2-2\cdot 4^2=3\{3(L-3)+1\}, 
$$
which can be written as $x^2+3(z^2+zw+w^2)$ with $x,z\in\mathbb{Z}.$
\par
We last suppose that $n=9l+6.$ 
Taking $y=1,$ 
we have 
$$
n-2y^2=9l+6-2=9l+4 \equiv 1 \,\mathrm{mod} \, 3,  
$$
which can be represented as $x^2+3(z^2+zw+w^2)$ with $x,z\in\mathbb{Z}.$ 
\end{proof}

By the proof of Theorem \ref{thm:a=1,c=3}, 
we obtain 
\begin{theorem}
\label{thm:a=1,c=3-(2)}
{\it
If $f^{a,b}_3$ represents $n=1,2,6,$ 
it can represent all the nonnegative integers. 
}
\end{theorem}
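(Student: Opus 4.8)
The statement to prove is Theorem \ref{thm:a=1,c=3-(2)}: if $f^{a,b}_3$ represents $n=1,2,6$, then it represents all nonnegative integers. The plan is to run the same case analysis already carried out in the proof of Theorem \ref{thm:a=1,c=3}, but now driven only by the hypothesis that $1,2,6$ are represented rather than by the full classification of admissible $(a,b)$.

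First I would use $n=1$: since $f^{a,b}_3(x,y,z,w)=ax^2+by^2+3(z^2+zw+w^2)$ and the form $z^2+zw+w^2$ takes only the values $0,1,3,4,\dots$ (never $2$), representing $1$ forces $a=1$ (because $b\ge a\ge 1$ and $3(z^2+zw+w^2)\ne 1$). Next I would use $n=2$: with $a=1$ we need $x^2+by^2+3(z^2+zw+w^2)=2$, and since $3(z^2+zw+w^2)\in\{0\}$ here and $x^2\in\{0,1\}$, this forces $by^2\in\{1,2\}$, hence $b\in\{1,2\}$. Then I would use $n=6$ exactly as in the proof of Theorem \ref{thm:a=1,c=3}: if $b=1$, representing $6=x^2+y^2+3(z^2+zw+w^2)$ would require $z^2+zw+w^2=1$ (it cannot be $0$ or $2$), leaving $x^2+y^2=3$, which is impossible; so $b=2$.

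Having reduced to $(a,b,c)=(1,2,3)$, the conclusion follows because Theorem \ref{thm:a=1,c=3} (via Proposition \ref{prop:a=1,c=3} and the explicit argument in its proof) already shows $f^{1,2}_3$ represents every nonnegative integer — in particular it handles the residues $n=3l+2$ and $n=9l+6$ that the naive approach misses. So the body of the proof is simply: "By the choices $n=1,2,6$ we get $(a,b)=(1,2)$ as in the proof of Theorem \ref{thm:a=1,c=3}; the claim then follows from that theorem."

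The only mild obstacle is bookkeeping with the exceptional small cases (e.g.\ $n=2$ itself and $n=6,\ 15$, i.e.\ small $l$) in the $b=2$ representability argument, but those are already disposed of inside the proof of Theorem \ref{thm:a=1,c=3}, so nothing new is needed. I would therefore keep this proof to a few lines, pointing to the earlier argument rather than reproducing the case analysis.
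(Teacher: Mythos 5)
Your proposal is correct and follows essentially the same route as the paper, which proves this theorem simply by appealing to the proof of Theorem \ref{thm:a=1,c=3}: the values $n=1,2,6$ force $a=1$, then $b\in\{1,2\}$, then $b\neq 1$ (since $z^2+zw+w^2$ omits $2$ and neither $6$ nor $3$ is a sum of two squares), and universality of $f^{1,2}_3$ is exactly Theorem \ref{thm:a=1,c=3}. You have merely made explicit the reduction that the paper leaves implicit, so nothing further is needed.
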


\section{The case where $c=4$}

We use the following result of Dickson \cite[pp. 112-113]{Dickson-2}:

\begin{lemma}
\label{lem-1,4,12}
{\it
$n\in\mathbb{N}_0$ can be written as $x^2+4y^2+12z^2$ with $x,y,z\in\mathbb{Z}$ 
if and only if $n\neq 4l+2, \, 4l+3, \, 9^k(9l+6), \,\, (k,l\in\mathbb{N}_0).$ 
}
\end{lemma}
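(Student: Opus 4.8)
The plan is to prove the two implications separately and, for sufficiency, to reduce to the ternary form $x^2+y^2+3z^2$ of Lemma~\ref{lem-1,1,3} and to the binary form $u^2+3v^2$. For necessity I would use three congruences. Reducing $x^2+4y^2+12z^2$ modulo $4$ leaves only the residues $0,1$, so $n\equiv 2,3\pmod 4$ is impossible; this removes $n=4l+2$ and $n=4l+3$. Reducing modulo $9$, the residues of $x^2+4y^2$ fill $\{0,1,2,4,5,7,8\}$ while $12z^2\equiv 0,3\pmod 9$, so the only residue class missed by $x^2+4y^2+12z^2$ is $6$; hence $n\equiv 6\pmod 9$ is not represented. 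Finally, if $9\mid n$ and $n=x^2+4y^2+12z^2$, then modulo $3$ we get $x^2+y^2\equiv 0$, which forces $3\mid x$ and $3\mid y$; writing $x=3x'$, $y=3y'$ and using $9\mid n$ then forces $3\mid z$ as well, and $n/9=x'^2+4y'^2+12z'^2$. An induction on $k$ then excludes every $n=9^k(9l+6)$.

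For sufficiency, consider first $n$ with $4\mid n$. In any representation $n=x^2+4y^2+12z^2$ the variable $x$ is even, and conversely setting $x=2x'$ shows that $n=4m$ is represented by $x^2+4y^2+12z^2$ if and only if $m$ is represented by $x^2+y^2+3z^2$. A short computation shows that $4m$ has the shape $9^k(9l+6)$ precisely when $m$ does (if $m=9^k(9l+6)$ then $4m=9^k(9(4l+2)+6)$, and conversely $4\mid 9l+6$ forces $l\equiv 2\pmod 4$, so $9l+6=4(9l'+6)$). Hence Lemma~\ref{lem-1,1,3} settles this case at once.

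Now let $n$ be odd; since $n\equiv 3\pmod 4$ is already excluded we have $n\equiv 1\pmod 4$. Writing $4y^2+12z^2=(2y)^2+3(2z)^2$, the problem becomes: find an odd integer $x$ with $x^2\le n$ such that $m:=(n-x^2)/4$ is a nonnegative integer represented by the binary form $u^2+3v^2$. Since $u^2+3v^2$ is the principal form of discriminant $-12$, of class number one, $m$ is represented by it exactly when every prime $q\equiv 2\pmod 3$ (in particular $q=2$) divides $m$ to an even power. The constraint at $q=2$ just says that the $2$-adic valuation of $n-x^2$ is even, and it can always be met by choosing $x$ in an appropriate residue class modulo $16$. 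So everything comes down to showing that, for odd $n\equiv 1\pmod 4$ with $n\ne 9^k(9l+6)$, one may choose odd $x$ so that $(n-x^2)/4$ has every prime factor $\equiv 2\pmod 3$ to an even power. I would do this by a finite case analysis on $n$ modulo a small power of $3$, combined with the descent $n\mapsto n/9$ when $9\mid n$, in the manner of Dickson; alternatively one argues through the genus of $x^2+4y^2+12z^2$, whose local representation conditions live only at the primes $2$ and $3$ and coincide with the stated congruences.

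The hard part is exactly this odd case, and within it the uniform control of the odd primes $q\equiv 2\pmod 3$: one must subtract the correct auxiliary square $x^2$ so that $(n-x^2)/4$ lands in the image of $u^2+3v^2$, and it is here that Dickson's explicit descent (or the ternary-forms input) is needed. The even case, by contrast, is the one-line reduction to Lemma~\ref{lem-1,1,3} given above, and the necessity direction is routine congruence arithmetic.
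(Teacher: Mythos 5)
First, a point of reference: the paper does not prove this lemma at all --- it is quoted verbatim from Dickson \cite[pp.\ 112--113]{Dickson-2} as known input, so there is no internal proof to match your argument against. Judged on its own terms, your proposal gets the routine parts right: the necessity direction (residues mod $4$, the mod $9$ computation showing only the class $6$ is missed, and the $3\mid x,\,3\mid y,\,3\mid z$ descent giving the exclusion of $9^k(9l+6)$ by induction on $k$) is correct, and the reduction of the case $4\mid n$ to Lemma \ref{lem-1,1,3} via $x=2x'$, together with the verification that $4m$ is of the form $9^k(9l+6)$ exactly when $m$ is, is clean and complete.

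The genuine gap is the case $n\equiv 1\pmod 4$, which you yourself flag as ``the hard part'' and then do not carry out. Reducing to the statement that some odd $x$ makes $(n-x^2)/4$ representable by $u^2+3v^2$ is fine, and the class-number-one criterion (every prime $q\equiv 2\pmod 3$ to an even power) is correct; but satisfying that criterion is a condition at infinitely many primes $q$, one for each $q\equiv 2\pmod 3$ dividing $n-x^2$. A ``finite case analysis on $n$ modulo a small power of $3$'' cannot control the parity of $v_q(n-x^2)$ for all such $q$ simultaneously --- only the conditions at $2$ and $3$ are congruence conditions on $x$. What is actually needed is the full apparatus behind Dickson's table of regular ternary forms (a genus computation for $x^2+4y^2+12z^2$ showing the local obstructions at $2$ and $3$ are the only ones, plus a class-number or one-class-per-genus verification), and neither that computation nor the descent replacing it appears in your write-up. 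So the proposal is a correct plan for the easy half together with an accurate description of where the difficulty lies, but not a proof; the paper sidesteps the issue entirely by citing Dickson, and if you do not want to reproduce Dickson's argument you should do the same.
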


By Lemma \ref{lem-1,4,12}, we have the following proposition:

\begin{proposition}
\label{prop:a=1,c=4}
{\it
$n\in\mathbb{N}_0$ can be written as $x^2+4(y^2+yz+z^2)$ with $x,y,z\in\mathbb{Z}$ 
if and only if $n\neq 4l+2, \, 4l+3, \, 9^k(9l+6), \,\, (k,l\in\mathbb{N}_0).$
}
\end{proposition}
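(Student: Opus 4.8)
The strategy mirrors the proofs of Propositions~\ref{prop:a=1,c=1}, \ref{prop:a=2,c=1} and \ref{prop:a=1,c=3}: translate the statement into an identity of theta series, and then compare with the known representation criterion of Lemma~\ref{lem-1,4,12}. First I would replace $q$ by $q^4$ in the basic identities \eqref{eqn:tri(1,3)} and \eqref{eqn:tri(1,3)-(2)}, obtaining
\begin{align*}
a(q^4)&=a(q^{16})+6q^4\psi(q^8)\psi(q^{24}),\\
\varphi(q^4)\varphi(q^{12})&=a(q^{16})+2q^4\psi(q^8)\psi(q^{24}),
\end{align*}
and then multiply both identities through by $\varphi(q)$. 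The generating function $\varphi(q)a(q^4)$ is $\sum_n A^1_4(n)q^n$, the product $\varphi(q)\varphi(q^4)\varphi(q^{12})$ is $\sum_n r_{1,4,12}(n)q^n$, the term $\varphi(q)a(q^{16})$ is $\sum_n A^1_{16}(n)q^n$, and $q^4\varphi(q)\psi(q^8)\psi(q^{24})$ is $\sum_N m_{1\text{-}8,24}(N)q^{N+4}$. This yields
\begin{align*}
\sum_{n=0}^{\infty}A^1_4(n)q^n&=\sum_{n=0}^{\infty}A^1_{16}(n)q^n+6q^4\sum_{N=0}^{\infty}m_{1\text{-}8,24}(N)q^N,\\
\sum_{n=0}^{\infty}r_{1,4,12}(n)q^n&=\sum_{n=0}^{\infty}A^1_{16}(n)q^n+2q^4\sum_{N=0}^{\infty}m_{1\text{-}8,24}(N)q^N.
\end{align*}

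Next I would extract the positivity chain. Comparing coefficients of $q^n$, the quantity $A^1_4(n)$ is positive exactly when $A^1_{16}(n)>0$ or $m_{1\text{-}8,24}(n-4)>0$, and likewise $r_{1,4,12}(n)>0$ iff $A^1_{16}(n)>0$ or $m_{1\text{-}8,24}(n-4)>0$; since both coefficient vectors on the right are nonnegative, this gives $A^1_4(n)>0\iff r_{1,4,12}(n)>0$. By Lemma~\ref{lem-1,4,12}, $r_{1,4,12}(n)>0$ precisely when $n\neq 4l+2,\,4l+3,\,9^k(9l+6)$, which is the claimed criterion for $x^2+4(y^2+yz+z^2)$. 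I would lay this out as a displayed chain of $\Longleftrightarrow$'s exactly as in the earlier propositions.

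The one point that needs a little care — and the only place I would expect a genuine obstacle — is the bookkeeping of the auxiliary forms: one must check that $\varphi(q)\psi(q^8)\psi(q^{24})$ really is the generating function of $m_{1\text{-}8,24}$, i.e. that the substitutions $y\mapsto t_y$ with weight $8$ and $z\mapsto t_z$ with weight $24$ come out of $\psi(q^8)$ and $\psi(q^{24})$ correctly, and that the shift by $q^4$ lands the index at $n-4$ rather than $n-1$ (this is the feature that distinguishes the $c=4$ case from $c=1,2,3$). Once the three generating-function dictionaries $\varphi(q)a(q^4)\leftrightarrow A^1_4$, $\varphi(q)\varphi(q^4)\varphi(q^{12})\leftrightarrow r_{1,4,12}$, $\varphi(q)a(q^{16})\leftrightarrow A^1_{16}$ are pinned down, the rest is the same formal argument used three times already, and the proposition follows.
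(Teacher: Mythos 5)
Your proposal is correct and follows essentially the same route as the paper's own proof: substitute $q\mapsto q^4$ in the two basic identities, multiply by $\varphi(q)$, read off the generating functions $A^1_4$, $r_{1,4,12}$, $A^1_{16}$, $m_{1\text{-}8,24}$, and close the equivalence chain via Lemma~\ref{lem-1,4,12}. The point you flag about nonnegativity of the coefficients (so that positivity of the sum is equivalent to positivity of at least one summand) is indeed the only step needing care, and it goes through exactly as in the earlier cases.
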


\begin{proof}
Replacing $q$ by $q^4$ in equations (\ref{eqn:tri(1,3)}) and (\ref{eqn:tri(1,3)-(2)}), 
we have 
\begin{align*}
a(q^4)=&a(q^{16})+6q^4\psi (q^8)\psi (q^{24}),   \\
\varphi(q^4)\varphi(q^{12})=&a(q^{16})+2q^4\psi (q^8)\psi (q^{24}). 
\end{align*}
Multiplying both sides of these equations by $\varphi(q),$ 
we have 
\begin{align}
\varphi(q)a(q^4)=&\varphi(q)a(q^{16})+6q^4\varphi(q)\psi (q^8)\psi (q^{24}),  \\
\varphi(q)\varphi(q^4)\varphi(q^{12})=&\varphi(q)a(q^{16})+2q^4\varphi(q)\psi (q^8)\psi (q^{24}). 
\end{align}
which implies that 
\begin{align*}
\sum_{n=0}^{\infty} A^1_4(n) q^n=&\sum_{n=0}^{\infty} A^1_{16}(n) q^n +6q^4 \sum_{N=0}^{\infty} m_{1\text{-}8,24}(N)q^N,  \\
\sum_{n=0}^{\infty} r_{1,4,12}(n) q^n=&\sum_{n=0}^{\infty} A^1_{16}(n) q^n +2q^4 \sum_{N=0}^{\infty} m_{1\text{-}8,24}(N)q^N.                                         
\end{align*}
Therefore, 
it follows that
\begin{align*}
n\neq 4l+2, \, 4l+3, \, 9^k(9l+6), \,\, (k,l\in\mathbb{N}_0) &\Longleftrightarrow r_{1,4,12}(n)>0  \\
                                                     &\Longleftrightarrow A^1_{16}(n)>0 \,\textrm{or} \, m_{1\text{-}8,24}(n-4)>0,  \\
                                                     &\Longleftrightarrow A^1_4(n)>0,
\end{align*}
which proves the proposition. 
\end{proof}

Noting that $a=1, b=2$ 
if $f^{a,b}_4$ can represent any nonnegative integers, 
we obtain the following theorem:

\begin{theorem}
\label{thm:a=1,c=4}
{\it
$f^{a,b}_4$ 
can represent any nonnegative integers $n\in\mathbb{N}_0$ 
if and only if $a=1$ and $b=2.$
}
\end{theorem}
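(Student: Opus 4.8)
The plan is to mirror the structure already used for $c=1,2,3$: reduce the problem to the single-variable condition of Proposition \ref{prop:a=1,c=4}, pin down the admissible pair $(a,b)$ by testing small values of $n$, and then verify the remaining congruence classes by an explicit choice of the $y$-variable. First I would justify the parenthetical remark ``$a=1,b=2$'': since $f^{a,b}_4$ must represent $n=1$, and $4(z^2+zw+w^2)\ge 4$ for $(z,w)\ne(0,0)$, we need $ax^2+by^2=1$ for some integers, forcing $a=1$; then $f^{1,b}_4$ must represent $n=2$ and $n=3$, and since $4(z^2+zw+w^2)\in\{0,4,\dots\}$, we need $x^2+by^2$ to hit both $2$ and $3$, which (with $1\le b$) forces $b=2$. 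So only $(a,b,c)=(1,2,4)$ is a candidate, and the ``only if'' direction is done.

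For the ``if'' direction I must show $f^{1,2}_4(x,y,z,w)=x^2+2y^2+4(z^2+zw+w^2)$ represents every $n\in\mathbb N_0$. By Proposition \ref{prop:a=1,c=4}, the form $x^2+4(z^2+zw+w^2)$ already represents every $n$ except those in the three families $4l+2$, $4l+3$, and $9^k(9l+6)$; so it suffices to handle these exceptional $n$ by allotting a suitable value to the $2y^2$ term. The key step is a case analysis modulo small numbers, exactly as in the proofs of Theorems \ref{thm:a=1,c=1}, \ref{thm:a=1,c=2}, \ref{thm:a=1,c=3}: for $n=4l+2$ take $y=1$ (or $y=2$, etc.) so that $n-2y^2$ falls outside all three forbidden families; similarly for $n=4l+3$; and for $n=9^k(9l+6)$ subtract $2y^2$ with $y$ chosen so the residue mod $9$ (and mod $4$) is admissible, treating a few genuinely small values of $n$ by hand. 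Concretely, one checks that $n-2\cdot1^2=4l$ handles $n\equiv 2\bmod 4$, that $n-2\cdot1^2=4l+1$ handles $n\equiv 3\bmod 4$, and that for $n=9^k(9l+6)$ a choice among $y\in\{1,2,4\}$ always lands in the complement of $\{4m+2,4m+3,9^j(9m+6)\}$, with the finitely many obstructions (the analogues of $n=6,15$ in the earlier proofs) disposed of directly.

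The main obstacle will be the $9^k(9l+6)$ family, since subtracting $2y^2$ interacts nontrivially with the $9$-adic condition: one must ensure not only that the mod-$9$ residue of $n-2y^2$ avoids $6$, but also that no factor of $9$ can be extracted to push it back into the bad set, while simultaneously keeping it out of $4m+2$ and $4m+3$. This is handled by the same device as in Theorem \ref{thm:a=1,c=1} (case $b=6$) and Theorem \ref{thm:a=1,c=3}: split on the residue of the ``depth-zero'' part $9l+6$ modulo $3$ or $9$, use $y=1$ when that already works, and otherwise pass to $y=2$ or $y=4$ so that $n-2y^2$ becomes $9$ times a number congruent to $1$ or $2\bmod 3$, which lies in the good set by Proposition \ref{prop:a=1,c=4}; a handful of small cases are verified individually. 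I expect this to be routine but slightly fiddly, and the write-up should be kept parallel to the earlier theorems so the reader can check it by inspection.
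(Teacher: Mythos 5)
Your proposal follows the paper's proof essentially step for step: the ``only if'' direction by testing $n=1,2,3$ against the fact that $4(z^2+zw+w^2)\in\{0,4,\dots\}$, and the ``if'' direction by reducing to Proposition \ref{prop:a=1,c=4} and subtracting $2y^2$ with $y$ chosen according to the residue of $n$ modulo $4$ and modulo $9$, with a few small $n$ checked by hand. The one caution is that your concrete assertion ``$n-2\cdot 1^2=4l$ handles $n\equiv 2\bmod 4$'' is not literally sufficient (e.g.\ for $n=218$ one has $n-2=216=9(9\cdot 2+6)$, which is again exceptional, and one must pass to $y=3$ exactly as the paper does when $l\equiv 0,6\bmod 9$); since you already flag this $9$-adic interaction and propose varying $y$ accordingly --- and you are in fact more careful than the paper in insisting on treating the full family $9^k(9l+6)$ rather than only $k=0$ --- the plan is sound and only the routine case bookkeeping remains to be written out.
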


\begin{proof}
The ``only if'' direction is obvious. 
In order to establish the ``if'' direction, 
we have only to prove that $f^{1,2}_4$ represents $n=4l+2, \, 4l+3, \,9l+6, \,\,(l\in\mathbb{N}_0).$ 
\par
Suppose that $n=4l+2.$ 
If $l \not\equiv0,6 \,\mathrm{mod} \,9,$ 
taking $y=1,$ we have 
$$
n-2y^2=4l+2-2=4l, 
$$
which can be represented as $x^2+4(z^2+zw+w^2)$ with $x,z\in\mathbb{Z}.$ 
If $\l \equiv0,6 \,\mathrm{mod} \,9,$ taking $y=3,$ 
we obtain 
$$
n-2y^2=4l+2-2\cdot 3^2=4(l-4),
$$
which can be represented as $x^2+4(z^2+zw+w^2)$ with $x,z\in\mathbb{Z}.$ 
\par
Suppose that $n=4l+3.$ 
If $l \not\equiv2,8 \,\mathrm{mod} \,9,$ 
taking $y=1,$ we have 
$$
n-2y^2=4l+3-2=4l+1\not\equiv 0,6 \,\mathrm{mod} \,9, 
$$
which can be represented as $x^2+4(z^2+zw+w^2)$ with $x,z\in\mathbb{Z}.$ 
If $\l \equiv2,8 \,\mathrm{mod} \,9,$ taking $y=3,$ 
we obtain 
$$
n-2y^2=4l+3-2\cdot 3^2=4(l-4)+1\equiv 2, 8 \,\mathrm{mod} \, 9,
$$
which can be represented as $x^2+4(z^2+zw+w^2)$ with $x,z\in\mathbb{Z}.$ 
It is easy to check that 
$f^{1,2}_4$ can represent $n=11.$ 
\par
Suppose that $n=9l+6.$ 
If $\l\equiv 0 \,\mathrm{mod} \,4$ and $l=4L, \,\,(L\in\mathbb{N}_0),$ 
taking $y=5,$  
we have 
$$
n-2y^2=9l+6-2\cdot 5^2=4\{9(L-2)+7\}, 
$$
which can be written as $x^2+4(z^2+zw+w^2)$ with $x,z\in\mathbb{Z}.$ 
It is easy to check that $f^{1,2}_4$ can represent $n=6,42.$
\par
If $\l\equiv 1 \,\mathrm{mod} \,4,$ 
taking $y=1,$ 
we have 
$$
n-2y^2=9l+6-2\cdot 1^2=9l+4\equiv 1 \,\mathrm{mod} \, 4, 
$$
which can be represented as $x^2+4(z^2+zw+w^2)$ with $x,z\in\mathbb{Z}.$
\par
If $\l\equiv 2 \,\mathrm{mod} \,4$ and $l=4L+2, \,\,(L\in\mathbb{N}_0),$ 
taking $y=4,$  
we have 
$$
n-2y^2=9l+6-2\cdot 4^2=4(9L-2), 
$$
which can be written as $x^2+4(z^2+zw+w^2)$ with $x,z\in\mathbb{Z}.$ 
It is easy to check that $f^{1,2}_4$ can represent $n=24.$
\par
If $\l\equiv 3 \,\mathrm{mod} \,4,$ 
taking $y=2,$ 
we have 
$$
n-2y^2=9l+6-2\cdot 2^2=9l-2\equiv 1 \,\mathrm{mod} \, 4, 
$$
which can be represented as $x^2+4(z^2+zw+w^2)$ with $x,z\in\mathbb{Z}.$
\end{proof}

By the proof of Theorem \ref{thm:a=1,c=4}, 
we obtain 
\begin{theorem}
\label{thm:a=1,c=4-(2)}
{\it
If $f^{a,b}_4$ represents $n=1,2,3,$ 
it can represent all the nonnegative integers. 
}
\end{theorem}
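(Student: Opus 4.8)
The plan is to observe that the three test values $n=1,2,3$ already force $(a,b)=(1,2)$, after which Theorem~\ref{thm:a=1,c=4} applies verbatim to give the full conclusion.

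The first step is to record the small values of the Loeschian form: $z^2+zw+w^2\in\{0,1,3,4,7,\dots\}$, so that $4(z^2+zw+w^2)$ is either $0$ or at least $4$. Hence for any $n\le 3$ a representation $n=ax^2+by^2+4(z^2+zw+w^2)$ must have $z=w=0$, and the question collapses to representability of $n$ by the binary form $ax^2+by^2$ with $1\le a\le b$.

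The second step is to run through $n=1,2,3$ in turn. From $n=1=ax^2+by^2$: since $a,b\ge 1$ at most one square is nonzero, so either $a=1$ (taking $x=\pm1,\,y=0$) or $b=1$ (taking $y=\pm1$), and in the latter case $a\le b=1$ gives $a=1$ as well; either way $a=1$. With $a=1$ fixed, $n=2=x^2+by^2$ forces $by^2\in\{1,2\}$, hence $b\in\{1,2\}$. Finally $n=3=x^2+by^2$: the case $b=1$ is impossible because $x^2+y^2=3$ has no solution, so $b=2$ (and indeed $1^2+2\cdot 1^2=3$). Thus $(a,b)=(1,2)$.

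The third step is simply to quote Theorem~\ref{thm:a=1,c=4}, which asserts that $f^{1,2}_4$ represents every nonnegative integer, completing the argument. There is no serious obstacle here; the only point that needs a little care is justifying that the term $4(z^2+zw+w^2)$ cannot contribute when $n\le 3$, i.e. the short explicit list of values of $z^2+zw+w^2$, and this is immediate.
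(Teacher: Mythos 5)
Your proposal is correct and follows exactly the route the paper intends: the values $n=1,2,3$ force $(a,b)=(1,2)$ (since $4(z^2+zw+w^2)$ is $0$ or at least $4$, these reduce to representations by $ax^2+by^2$), and then Theorem~\ref{thm:a=1,c=4} gives universality of $f^{1,2}_4$. The paper leaves this elimination implicit in the remark ``Noting that $a=1, b=2$ if $f^{a,b}_4$ can represent any nonnegative integers,'' so your write-up simply supplies the details the author omitted.
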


\section{The case where $c\ge 5$}

\subsection{The case where $c=5$}

\begin{theorem}
\label{thm:c=5}
{\it
There exist no positive integers $a,b$ such that 
$1\le a\le b$ and 
$f^{a,b}_5$ can represent any nonnegative integers $n\in\mathbb{N}_0,$ 
where 
$$
f^{a,b}_5(x,y,z,w)=ax^2+by^2+5(z^2+zw+w^2). 
$$
}
\end{theorem}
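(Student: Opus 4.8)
The plan is to show that the four small values $n=1,2,3,10$ already eliminate every admissible pair $(a,b)$. Everything rests on the behaviour of $g(z,w):=z^2+zw+w^2$: one has $g(z,w)=0$ only for $z=w=0$, $g$ never attains the value $2$ (this is precisely the observation behind Lemma~\ref{lem:c=1}), while $g$ does attain $1$ and $3$; consequently the set $\{\,5g(z,w):z,w\in\mathbb{Z}\,\}$ meets the interval $[0,14]$ only in $\{0,5\}$, taking the value $0$ exactly when $z=w=0$ and the value $5$ exactly when $g(z,w)=1$.

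First I would force $a=1$: if $f^{a,b}_5$ represents $n=1$, then $5g(z,w)\ge 5>1$ forces $z=w=0$ in any representation, hence $ax^2+by^2=1$, and since $1\le a\le b$ this is possible only with $a=1$ (take $x=\pm1$, $y=0$). With $a=1$ fixed I would then bound $b$: if $f^{1,b}_5$ represents $n=2$, the same inequality again forces $z=w=0$, so $x^2+by^2=2$; as $x^2=2$ is impossible we need $|y|=1$ and $x^2=2-b\ge 0$, that is $b\in\{1,2\}$.

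Finally I would eliminate the two survivors by exhibiting in each case a nonnegative integer that is not represented. For $(a,b)=(1,1)$: a representation of $n=3$ would force $z=w=0$, hence $x^2+y^2=3$, which is unsolvable, so $f^{1,1}_5$ misses $3$. For $(a,b)=(1,2)$: a representation of $n=10$ would need $5g(z,w)\in\{0,5\}$, i.e. $x^2+2y^2\in\{10,5\}$, and checking $|y|\le 2$ shows neither equation has a solution, so $f^{1,2}_5$ misses $10$. This exhausts all cases, proving the theorem. I do not expect any real obstacle: the argument is a short case split exploiting the gap between consecutive values of $5g$, and the only computations needed are the trivial non-solvability of $x^2+y^2=3$, $x^2+2y^2=5$, and $x^2+2y^2=10$.
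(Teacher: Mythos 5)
Your argument is correct and follows essentially the same route as the paper: the paper likewise uses $n=1,2,3$ to force $(a,b)=(1,2)$ and then excludes $n=10$ by observing that neither $10$ nor $5$ is of the form $x^2+2y^2$ and that $z^2+zw+w^2$ never equals $2$. Your version merely makes the $b=1$ elimination explicit as a separate case; the substance is identical.
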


\begin{proof}
Suppose that there exist such positive integers $a$ and $b.$ 
Considering $n=1,$ we have $a=1.$ 
Taking $n=2,$ we obtain $b=1,2.$ 
Considering $n=3,$ we have $(a,b)=(1,2).$ 
\par
Take $n=10.$ Then, 
$n$ cannot be written as $x^2+2y^2$ with $x,y\in\mathbb{Z},$ 
which implies that $z^2+zw+w^2=1,$ 
because $2$ cannot be represented as $z^2+zw+w^2$ with $z,w\in\mathbb{Z}.$ 
Therefore, 
it follows that $5=10-5\cdot 1$ can be written as $x^2+2y^2$ with $x,y\in\mathbb{Z},$ 
which is impossible.  
\end{proof}

\subsection{The case where $c\ge 6$}

\begin{theorem}
\label{thm:cge6}
{\it
There exist no positive integers $a,b,c$ such that 
$1\le a\le b,$ $c\ge 6$ and 
$f^{a,b}_c$ can represent any nonnegative integers $n\in\mathbb{N}_0$. 
}
\end{theorem}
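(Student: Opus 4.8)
The plan is to argue exactly as in the $c=5$ case (Theorem~\ref{thm:c=5}), using the small integers $n=1,2,3$ to pin down $(a,b)$, and then a slightly larger integer to derive a contradiction. First I would consider $n=1$: since $c\ge 6$, the term $c(z^2+zw+w^2)$ contributes either $0$ or at least $6$, so $1=ax^2+by^2$ forces $a=1$. Next, considering $n=2$ and recalling that $2$ cannot be written as $z^2+zw+w^2$, we need $2=x^2+by^2$, which gives $b=1$ or $b=2$. Then $n=3$ eliminates $b=1$: if $b=1$, then $3=x^2+y^2+c(z^2+zw+w^2)$ would force $z^2+zw+w^2=0$ (as $c\ge 6>3$), hence $3=x^2+y^2$, which is impossible. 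So the only surviving candidate is $(a,b)=(1,2)$, and we are reduced to studying $f^{1,2}_c(x,y,z,w)=x^2+2y^2+c(z^2+zw+w^2)$ with $c\ge 6$.

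For the final contradiction I would pick an integer $n$ that is small enough that the $c$-term is forced to a known value, yet cannot be completed by $x^2+2y^2$. The natural choices are governed by which integers fail to be of the form $x^2+2y^2$; the smallest such are $5,10,13,14,\dots$ Since $z^2+zw+w^2$ takes the values $0,1,3,4,7,\dots$, for a given $n<c$ we have $z^2+zw+w^2=0$, and for $c\le n<3c$ (roughly) we have $z^2+zw+w^2\in\{0,1\}$. So I would split on the size of $c$. If $6\le c\le 10$, take $n=10$: since $10$ is not of the form $x^2+2y^2$, and $10<3c$ while the value $z^2+zw+w^2=3$ would already use up $3c\ge 18>10$, we are forced to $z^2+zw+w^2\in\{0,1\}$, i.e. $10=x^2+2y^2$ or $10-c=x^2+2y^2$; the first is impossible, and for $c\in\{6,\dots,10\}$ one checks $10-c\in\{4,3,2,1,0\}$, of which $3$ (the case $c=7$) is not of the form $x^2+2y^2$ — so that single value of $n$ does not quite suffice for all $c$ in this range, and I would patch $c=7$ with a second integer such as $n=14$ (not of the form $x^2+2y^2$, with $14-7=7$ also not of that form and $14<3\cdot 7$ forcing the $c$-term into $\{0,1\}$). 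For $c\ge 11$, simply take $n=10<c$: then $z^2+zw+w^2=0$ is forced, so $10=x^2+2y^2$, which is impossible.

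Thus the main obstacle is purely bookkeeping: choosing, for each residue class / size range of $c$, a witness $n$ that (i) is not representable by $x^2+2y^2$ and (ii) is small enough relative to $c$ that the ternary term $c(z^2+zw+w^2)$ is confined to a handful of values, each of which also leaves a non-$x^2+2y^2$ residue. Because the set of non-$x^2+2y^2$ integers has bounded gaps and contains long runs (e.g. consecutive-ish values among $5,10,13,14,20,21,22,\dots$), a finite case analysis on $c$ — essentially $c=6,7,8,9,10$ individually, then $c\ge 11$ uniformly via $n=10$ — closes the argument. I would present it in that order: reduce to $(a,b)=(1,2)$, dispose of $c\ge 11$ with $n=10$, then handle the five remaining small values of $c$ by exhibiting the appropriate witness(es), mirroring the style of the proof of Theorem~\ref{thm:c=5}.

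\begin{proof}
Suppose such positive integers $a,b,c$ exist, with $c\ge 6$. Considering $n=1$: the term $c(z^2+zw+w^2)$ is either $0$ or at least $6$, so $1=ax^2+by^2$, forcing $a=1$. Considering $n=2$: since $2$ cannot be written as $z^2+zw+w^2$, we need $2=x^2+by^2$, so $b=1$ or $b=2$. Considering $n=3$: if $b=1$, then since $c\ge 6>3$ we would need $z^2+zw+w^2=0$, hence $3=x^2+y^2$, which is impossible; therefore $b=2$. Thus $(a,b)=(1,2)$ and we study
$$
f^{1,2}_c(x,y,z,w)=x^2+2y^2+c(z^2+zw+w^2),\quad c\ge 6.
$$
Note that $10$ and $14$ cannot be written as $x^2+2y^2$, and $z^2+zw+w^2\in\{0,1,3,4,7,\ldots\}$.

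If $c\ge 11$, take $n=10<c$. Then $z^2+zw+w^2=0$, so $10=x^2+2y^2$, which is impossible.

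If $c=6,8,9$ or $10$, take $n=10$. Since $10<3c$ and $3c\ge 18>10$, we must have $z^2+zw+w^2\in\{0,1\}$, so $10=x^2+2y^2$ or $10-c=x^2+2y^2$. The former is impossible, and $10-c\in\{4,2,1,0\}$ accordingly; but $4=2^2$, $2=2\cdot 1^2$, $1=1^2$, $0=0^2$ are all of the form $x^2+2y^2$, so this forces a representation — a contradiction only once we see that none of these arises: in fact we must re-examine. Instead take $n=14$. Since $14<3c$ for $c\ge 6$, and $z^2+zw+w^2=3$ uses $3c\ge 18>14$, we have $z^2+zw+w^2\in\{0,1\}$, so $14=x^2+2y^2$ or $14-c=x^2+2y^2$. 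The former is impossible; and for $c=6,7,8,9,10$ we get $14-c=8,7,6,5,4$, of which $7$, $6$, and $5$ are not of the form $x^2+2y^2$. This handles $c=7,8,9$.

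For $c=6$: take $n=14$ gives $14-6=8=2\cdot 2^2$, so this fails; instead take $n=21$. Since $21<3\cdot 6=18$ is false, $z^2+zw+w^2$ could be $0,1,3$; but $3\cdot 6=18\le 21$, so also $21-18=3$ must be of the form $x^2+2y^2$, which it is not; and $21=x^2+2y^2$ fails, $21-6=15$ is not of the form $x^2+2y^2$. So in every case $21$ is not represented, a contradiction.

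For $c=10$: take $n=14$ gives $14-10=4=2^2$, failing; instead take $n=13$, which is not of the form $x^2+2y^2$. Since $13<3\cdot 10=30$, we have $z^2+zw+w^2\in\{0,1\}$, so $13=x^2+2y^2$ or $13-10=3=x^2+2y^2$; neither holds, a contradiction.

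In all cases we reach a contradiction, completing the proof.
\end{proof}
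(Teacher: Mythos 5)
Your reduction to $(a,b)=(1,2)$ via $n=1,2,3$ matches the paper exactly, but your closing argument both contains errors and misses the one-line finish. The paper simply observes that $n=5$ cannot be written as $x^2+2y^2$; since $5<6\le c$, the term $c(z^2+zw+w^2)$ is forced to vanish, and the contradiction is immediate for every $c\ge 6$ at once --- no splitting on the size of $c$ is needed. You even list $5$ as the smallest integer not of the form $x^2+2y^2$ in your plan, and then never use it, opting instead for $n=10$ (which only works uniformly for $c\ge 11$) and a patchwork of larger witnesses.

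The case analysis you substitute is broken in two places. For $c=6$ you claim $21-18=3$ is not of the form $x^2+2y^2$, but $3=1^2+2\cdot 1^2$ --- the very representation you used earlier to dispose of $n=3$; indeed $f^{1,2}_6(1,1,1,1)=1+2+18=21$, so $21$ is represented and is not a witness. For $c=8$ you claim $14-8=6$ is not of the form $x^2+2y^2$, but $6=2^2+2\cdot 1^2$; indeed $f^{1,2}_8(2,1,1,0)=4+2+8=14$, so $14$ is represented, and the case $c=8$ is left with no valid witness at all (your earlier $n=10$ attempt also fails there, since $10-8=2=2\cdot 1^2$). Both gaps, and the entire final page of bookkeeping, disappear if you use the single witness $n=5$.
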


\begin{proof}
Suppose that there exist such positive integers $a,b$ and $c.$ 
Considering $n=1,$ we have $a=1.$ 
Taking $n=2,$ we obtain $b=1,2.$ 
Considering $n=3,$ we have $(a,b)=(1,2).$ 
On the other hand, 
$n=5$ cannot be written as $x^2+2y^2$ with $x,y\in\mathbb{Z}.$  
\end{proof}

\section{Proof of Theorem \ref{thm:main} }

\begin{proof}
Theorem \ref{thm:main} (1) follows from 
Theorems \ref{thm:c=1-summary}, \ref{thm:a=1,c=2}, \ref{thm:a=1,c=3}, 
\ref{thm:a=1,c=4}, \ref{thm:c=5} and \ref{thm:cge6}. 
Theorem \ref{thm:main} (2) follows from Theorems \ref{thm:c=1-summary}, 
\ref{thm:a=1,c=2-(2)}, \ref{thm:a=1,c=3-(2)}, \ref{thm:a=1,c=4-(2)}, \ref{thm:c=5} and \ref{thm:cge6}. 
\end{proof}

\end{document}